\newtheorem{thm}{Theorem}[section]
\newtheorem{prop}{Proposition}[section]
\newtheorem{lem}{Lemma}[section]
\theoremstyle{definition}
\newtheorem{dfn}{Definition}[section]
\theoremstyle{remark}
\newtheorem{exa}{Example}[section]
\newcommand{\rank}{\operatorname{rank}}
\numberwithin{equation}{section}
\renewcommand{\phi}{\varphi}
\renewcommand{\epsilon}{\varepsilon}
\begin{document}

%begin{frontmatter}

\title{Parallel and dual surfaces of cuspidal edges}
\author{Keisuke Teramoto}
\address{Department of Mathematics, Graduate School of Science, 
Kobe University, Kobe 657-8501, Japan}
\email{teramoto@math.kobe-u.ac.jp}
\date{October 22, 2015}
\keywords{cuspidal edge, parallel surface, swallowtail, principal curvature.}
\subjclass[2010]{57R45, 53A05}
%\end{keyword}

\maketitle

%\tableofcontents

\begin{abstract}
We study parallel surfaces and dual surfaces of cuspidal edges. 
We give concrete forms of principal curvature and principal direction for cuspidal edges. 
Moreover, we define ridge points for cuspidal edges by using those. 
We clarify relations between singularities of parallel and dual 
surfaces and differential geometric properties of initial cuspidal edges.
\end{abstract}

%\end{frontmatter}
\section{Introduction}
It is well-known that 
cuspidal edges and swallowtails are generic singularities of wave fronts 
in $\bm{R}^3$ (for example, see \cite{AGV}). 
There are many studies of wave fronts 
from the differential geometric viewpoint (\cite{FSUY,KRSUY,MS,MSUY,S,SUY}). 
In particular, various geometric invariants of cuspidal edges were studied by Martins and Saji \cite{MS}. 
To investigate geometric invariants of cuspidal edges, they introduced the normal form of cuspidal edges. 
On the other hand, parallel surfaces of a regular surface are fronts and might have 
singularities.
Porteous, Fukui and Hasegawa studied the singularities of parallel surfaces and caustics 
from the viewpoint of singularity theory (cf. \cite{FH,P1,P2}) when the initial surface is regular.
Porteous \cite{P1,P2} introduced the notion of ridge point for regular surfaces 
relative to principal curvature and principal direction. 
Using this notion, 
Fukui and Hasegawa \cite{FH} showed relations between 
singularities of parallel surfaces and geometric properties of initial surfaces. 

In this paper, we deal with parallel surfaces when the initial surfaces have singularities. 
In particular, we consider parallel surfaces of cuspidal edges.
Since cuspidal edges have unit normal vector fields, we can consider parallel surfaces. 
We show relations between singularities on parallel surfaces 
and geometric properties of initial cuspidal edges (Theorem \ref{relation}). 
Ridge points play an important role in studying parallel surfaces of regular surfaces, 
and also play an important role in investigating this case. 
Generally, mean curvature is unbounded at cuspidal edges. 
Thus principal curvatures might be unbounded. 
We give a condition for one principal curvature to be well-defined (in particular, finite) 
as a $C^\infty$-function at cuspidal edges (Proposition \ref{converge}).
A notion of ridge points for cuspidal edges 
is defined in Section 2 using principal curvature and principal direction. 

In Section 3, we study parallel surfaces of a cuspidal edge from the viewpoint of differential geometry. 
Moreover, we study the extended distance squared functions on cuspidal edges. 
In the case of cuspidal edges, the extended distance squared function has $D_4$ singularities or worse, 
unlike the case of regular surfaces. 
We give the conditions for distance squared functions to have $D_4$ singularities (Theorem \ref{umbilic}). 

In Section 4, we study dual surfaces and the extended height functions. 
In the case of cuspidal edges, extended height functions have $A_2$ singularities or worse. 
We define dual surfaces as a part of the discriminant set of extended height functions. 
We give relations between singularities of dual surfaces and geometric 
properties of cuspidal edges (Proposition \ref{singdual}) and show conditions 
for the extended height function to have $D_4$ singularities. 
Moreover, we give relations between 
singularities of dual surfaces and extended height functions (Proposition \ref{relatedual}). 

All maps and functions considered here are of class $C^\infty$ unless otherwise stated.

\section{Cuspidal edges}
First we recall some properties of wave fronts and frontals. 
For details, see \cite{AGV,FSUY,KRSUY,MSUY,SUY}. 

Let $f:V\to\bm{R}^3$ be a smooth map 
and $(u,v)$ be a coordinate system on $V$, where $V\subset \bm{R}^2$ is a domain.  
We call $f$ a \textit{frontal} if there exists a unit vector field $\nu$ along $f$ such that 
$L=(f,\nu):V\to T_1\bm{R}^3$ is an isotropic map, 
where $T_1\bm{R}^3$ is the unit tangent bundle of $\bm{R}^3$ 
equipped with the canonical contact structure, 
and is identified with $\bm{R}^3\times S^2$, where $S^2$ is the unit sphere.  
If $L$ gives an immersion, $f$ is called a \textit{wave front} or a \textit{front}. 
The isotropicity of $L$ is equivalent to the orthogonality condition
$$\langle df(X_p),\nu(p)\rangle,\ \ (X_p\in T_pV,\ p\in V).$$
We call $\nu$ a \textit{unit normal vector} or the \textit{Gauss map} of $f$. 
For a frontal $f$, the function $\lambda:V\to\bm{R}$ defined as 
$\lambda(u,v)=\det (f_u,f_v,\nu)(u,v)$ 
is called the \textit{signed area density function}, where $f_u=\partial f/\partial u,f_v=\partial f/\partial v$. 
A point $p\in V$ is called a \textit{singular point} of $f$ if $f$ is not an immersion at $p$. 
Let $S(f)$ be the set of singular points of $f$. A singular point $p\in S(f)$ is called 
\textit{non-degenerate} if $d\lambda(p)\neq0$ holds. 
Let $p\in S(f)$ be a non-degenerate singular point. 
Then, by the implicit function theorem, $S(f)$ is parametrized by a regular curve 
$\gamma(t):(-\epsilon,\epsilon)\to V\ (\epsilon>0)$ with $\gamma(0)=p$. 
We call $\gamma$ a \textit{singular curve} and the direction of  $\gamma'=d\gamma/dt$ a \textit{singular direction}. 
Moreover, there exists a unique non-zero vector field $\eta(t)\in T_{\gamma(t)}V$ 
up to non-zero functional scalar multiplications 
such that $df(\eta(t))=\bm{0}$ on $S(f)$. 
This vector field $\eta(t)$ is called a \textit{null vector field}.
A non-degenerate singular point $p\in S(f)$ is said to be of the \textit{first kind} if $\eta(0)$ is transverse 
to $\gamma'(0)$. Otherwise, it is said to be of the \textit{second kind}. 

A \textit{cuspidal edge} is a map-germ $\mathcal{A}$-equivalent to 
$(u,v)\mapsto(u,v^2,v^3)$ at $\bm{0}$ 
and a \textit{swallowtail} is a map-germ $\mathcal{A}$-equivalent to 
$(u,v)\mapsto(u,3v^4+uv^2,4v^3+2uv)$ at $\bm{0}$, 
where two map-germs 
$f,g:(\bm{R}^2,\bm{0})\to(\bm{R}^3,\bm{0})$ are 
\textit{$\mathcal{A}$-equivalent} if there exist a diffeomorphisms 
$\Xi_s:(\bm{R}^2,\bm{0})\to(\bm{R}^2,\bm{0})$ on the source and 
$\Xi_t:(\bm{R}^3,\bm{0)}\to(\bm{R}^3,\bm{0)}$ on the target such that $\Xi_t\circ f=g\circ\Xi_s$ holds. 
The criteria for these singularities are known.
\begin{thm}[{\cite[Proposition 1.3]{KRSUY}}]\label{criteria}
Let $f:V\to \bm{R}^3$ be a front and $p\in V$ a non-degenerate singular point of $f$. 
\begin{itemize}
\item[$(1)$] $f$ at $p$ is $\mathcal{A}$-equivalent to a cuspidal edge if and only if 
$\eta\lambda(p)\neq0$ holds.
\item[$(2)$] $f$ at $p$ is $\mathcal{A}$-equivalent to a swallowtail if and only if 
$\eta\lambda(p)=0$ and $\eta\eta\lambda(p)\neq0$ hold.
\end{itemize}
\end{thm}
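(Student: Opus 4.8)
The plan is to exploit the dichotomy that $\eta\lambda(p)$ encodes between the two kinds of non-degenerate singular point, so that the analytic hypotheses become geometric ones. First I would note that the conditions are intrinsic: since $\gamma(t)\subset\lambda^{-1}(0)$ gives $d\lambda(\gamma'(t))=0$, non-degeneracy forces $\Ker d\lambda(p)=\langle\gamma'(0)\rangle$, so $\eta\lambda(p)=d\lambda(\eta(0))\neq0$ holds exactly when $\eta(0)$ is transverse to $\gamma'(0)$, i.e. $p$ is of the first kind; likewise $\eta\lambda(p)=0$ characterizes the second kind, and $\eta\eta\lambda(p)$ is the natural non-degeneracy of a second-kind point. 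Thus $(1)$ asserts that first-kind fronts are cuspidal edges and $(2)$ that second-kind fronts with $\eta\eta\lambda(p)\neq0$ are swallowtails. The reverse implications follow by checking these (invariant) non-vanishing conditions directly on the two normal forms, so the real content is the construction of the $\mathcal{A}$-equivalences in the forward direction.

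For $(1)$ I would choose coordinates adapted to the singularity, parametrizing so that $S(f)=\{v=0\}$ and $\eta=\partial_v$; this is legitimate in the first-kind case since $\eta$ is then transverse to the singular curve. Along $v=0$ we have $f_v=df(\eta)=\bm{0}$, hence $f_v=v\,h$ for a smooth $\bm{R}^3$-valued $h$ with $h(u,0)=f_{vv}(u,0)$, and a short computation gives $\eta\lambda(u,0)=\lambda_v(u,0)=\det(f_u,f_{vv},\nu)(u,0)$. The hypothesis $\eta\lambda(p)\neq0$ therefore says $f_u,f_{vv},\nu$ are linearly independent at $p$; since $\nu\perp f_u$ and, differentiating $\langle\nu,f_v\rangle=0$, also $\nu\perp f_{vv}$ on $v=0$, this is equivalent to $f_u,f_{vv}$ spanning the tangent plane. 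The front condition, $L=(f,\nu)$ immersive, forces $\nu_v(u,0)\neq0$; combining $\nu_v\perp\nu$ with $\nu_v\perp f_u$ (from $f_{uv}(u,0)=\bm{0}$) and the independence of $f_{vv}$ yields $\langle f_{vvv},\nu\rangle(p)=-2\langle\nu_v,f_{vv}\rangle(p)\neq0$, so the profile of $f$ transverse to the edge is a genuine $(2,3)$-cusp. Writing $f$ in a $u$-dependent frame built from $\gamma'(u),f_{vv}(u,0),\nu(u,0)$ then reduces $f$ to $(u,v^2+\cdots,v^3+\cdots)$, and a recognition argument finishes the $\mathcal{A}$-equivalence to $(u,v^2,v^3)$.

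For $(2)$ the second-kind condition $\eta\lambda(p)=0$ means $\eta(0)$ is tangent to $\gamma'(0)$, so the normalization above degenerates and the analysis is genuinely harder. Here I would work along the singular curve with the null field $\eta(t)$ and study $t\mapsto\det(\gamma'(t),\eta(t))$ (equivalently $\eta\lambda$ restricted to $\gamma$), whose vanishing at $t=0$ with nonzero derivative is precisely the translation of $\eta\lambda(p)=0$, $\eta\eta\lambda(p)\neq0$: the null and singular directions cross transversally. For a front this is exactly the swallowtail ($A_3$) condition, and I would establish the equivalence by reducing $f$ to $(u,3v^4+uv^2,4v^3+2uv)$ through an explicit but lengthy sequence of source and target coordinate changes controlled by the Taylor data of $f$ and $\nu$.

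Finally, for the reverse implications I would compute $\lambda,\eta,\eta\lambda$ and $\eta\eta\lambda$ directly on the two normal forms, read off the stated $(\text{non-})$vanishing, and invoke invariance of these conditions under the source and target diffeomorphisms. The main obstacle I anticipate is not the local identity for $\eta\lambda$, which is the linear-algebra computation above, but the recognition steps: producing the explicit diffeomorphisms, or making a clean appeal to finite determinacy of the cuspidal-edge and swallowtail front-germs, that upgrade ``the transverse profile agrees with a cusp, respectively a swallowtail, to the required order'' into honest $\mathcal{A}$-equivalence, with the swallowtail case demanding the most care because both coordinates genuinely participate.
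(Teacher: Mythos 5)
You should first be aware that the paper you are matching contains no proof of this statement: Theorem~\ref{criteria} is quoted from \cite[Proposition 1.3]{KRSUY} and is used downstream (in Theorem~\ref{relation} and Proposition~\ref{singdual}) as a black box. So the only meaningful benchmark is the proof in the cited source, and against that benchmark your write-up is a correct \emph{setup} rather than a proof. Your reductions do check out: non-degeneracy forces $\Ker d\lambda(p)=\langle\gamma'(0)\rangle$, so $\eta\lambda(p)\neq0$ indeed characterizes first-kind points; in coordinates with $S(f)=\{v=0\}$, $\eta=\partial_v$, one has $f_v=vh$ and $\eta\lambda(u,0)=\det(f_u,f_{vv},\nu)(u,0)$; the front condition plus this independence gives $\langle f_{vvv},\nu\rangle(p)=-2\langle\nu_v,f_{vv}\rangle(p)\neq0$, so each profile $v\mapsto f(u,v)$ is a $(2,3)$-cusp; and in case $(2)$ the pair $\eta\lambda(p)=0$, $\eta\eta\lambda(p)\neq0$ is equivalent (writing $\lambda=v\hat\lambda$, $\hat\lambda\neq0$, $\eta=a\partial_u+b\partial_v$) to $b(p)=0$, $b_u(p)\neq0$, i.e.\ to $\det(\gamma'(t),\eta(t))$ having a simple zero, as you claim. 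The easy direction (checking the conditions on the model germs and their $\mathcal{A}$-invariance) is also fine.

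The genuine gap is the one you flag yourself in your last paragraph: everything that makes this a theorem is deferred. In case $(1)$, passing from ``every transverse profile is a cusp, varying regularly with $u$'' to $\mathcal{A}$-equivalence with $(u,v)\mapsto(u,v^2,v^3)$ is not a formality; it requires an actual normalization argument --- a Morse-lemma-with-parameter reduction of the quadratic component together with an even/odd (Whitney division) splitting of the remaining component, or an equivalent determinacy argument --- and ``a recognition argument finishes'' does not supply it. In case $(2)$ you give no argument at all: the sentence ``for a front this is exactly the swallowtail ($A_3$) condition'' \emph{is} the statement to be proved, and ``an explicit but lengthy sequence of coordinate changes'' is a promissory note, not a mechanism. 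This forward implication is the hard half of \cite[Proposition 1.3]{KRSUY}, where it is handled by a substantive recognition argument (exploiting that a front is locally the projection of a Legendrian immersion, i.e.\ a discriminant of a generating family, so that singularity-theoretic versality machinery applies), not by inspection. As written, your proposal correctly translates the hypotheses into geometry and verifies the converse directions, but the core of both $(1)$ and $(2)$ is missing.
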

Let $p=\gamma(0)$ be a non-degenerate singular point of the first kind, and set 
\begin{equation}
\psi_{\text{ccr}}(t)=\det\left(\hat{\gamma}'(t),\nu\circ\gamma(t),d\nu_{\gamma(t)}(\eta(t))\right),\label{psi1}
\end{equation}
where $\hat{\gamma}=f\circ\gamma$ and $\hat{\gamma}'=d\hat{\gamma}/dt$. 
This function is originally defined in \cite{FSUY}. 
It is well-known that 
$f$ at $p$ is a front if and only if $\psi_{\mathrm{ccr}}(0)\neq0$. (\cite{FSUY,MSUY}) 

\subsection{Normal form of cuspidal edges}
Let $f=(f_1,f_2,f_3):V\to \bm{R}^3$ be a cuspidal edge and 
$\nu=(\nu_1,\nu_2,\nu_3)$ a unit normal vector field of $f$. 
Then, by using only coordinate transformations on the source and isometries on the target, 
we obtain the following normal form of cuspidal edges (for details, see \cite{MS}).
\begin{prop}[{\cite[Theorem 3.1]{MS}}]
Let $f:(\bm{R}^2,\bm{0})\to (\bm{R}^3,\bm{0})$ be a map-germ and $\bm{0}$ a cuspidal edge. 
Then there exist a diffeomorphism $\phi:(\bm{R}^2,\bm{0})\to (\bm{R}^2,\bm{0})$ 
and an isometry-germ$\Phi:(\bm{R}^3,\bm{0})\to (\bm{R}^3,\bm{0})$ satisfying that 
\begin{eqnarray}
\Phi\circ f \circ\phi(u,v)=\left( u,\frac{a_{20}}{2}u^2+\frac{a_{30}}{6}u^3+\frac{v^2}{2},
\frac{b_{20}}{2}u^2+\frac{b_{30}}{6}u^3+\frac{b_{12}}{2}uv^2+\frac{b_{03}}{6}v^3\right)+h(u,v) \label{normal},\\
(b_{20}\geq 0,b_{03}\neq 0)\nonumber
\end{eqnarray}
where
\begin{equation*}
h(u,v)=(0,u^4h_1(u),u^4h_2(u)+u^2v^2h_3(u)+uv^3h_4(u)+v^4h_5(u,v)),
\end{equation*}
with $h_i(u)\ (1\leq i \leq 4),\ h_5(u,v)$ smooth functions.
\end{prop}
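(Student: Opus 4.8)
The plan is to produce \eqref{normal} by composing a finite chain of source diffeomorphisms that preserve the cuspidal edge structure with target isometries, each normalizing one block of the Taylor expansion. First I would pass to adapted coordinates: since $\bm 0$ is non-degenerate of the first kind, $S(f)$ is a regular curve meeting the null direction transversally, so a source diffeomorphism carries $S(f)$ to the $u$-axis $\{v=0\}$ and the null field $\eta$ to $\partial/\partial v$. Then $f_v(u,0)\equiv\bm 0$, and Hadamard's lemma gives
\[
f(u,v)=c(u)+v^2A(u)+v^3B(u,v),
\]
where $c(u)=f(u,0)$ is the regular edge curve, while $A(0)=\tfrac12 f_{vv}(\bm 0)\neq\bm 0$ is independent of $c'(0)=f_u(\bm 0)$; these independence facts are exactly the cuspidal edge conditions. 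In particular every component of $f$ is free of terms linear in $v$.

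I would then fix the target frame. After a translation assume $f(\bm 0)=\bm 0$. A rotation aligns $c'(0)$ with $e_1$, and a further rotation about the $e_1$-axis places $A(0)$ in the $e_1e_2$-plane with positive $e_2$-component; the tangent plane of the front at $\bm 0$ is then the $e_1e_2$-plane and $\nu(\bm 0)=e_3$. Because $\partial_u f_1(\bm 0)=|c'(0)|\neq0$, the germ $(u,v)\mapsto(f_1(u,v),v)$ is a source diffeomorphism; it fixes $\{v=0\}$ and, since $f_{1v}(u,0)=0$, keeps $\partial_v$ null, so I may assume $f_1\equiv u$. As the edge curve is tangent to $e_1$, the functions $f_2(u,0)$ and $f_3(u,0)$ have neither constant nor linear part, producing the $u^2$ and $u^3$ blocks of \eqref{normal}; a reflection $e_3\mapsto -e_3$ (which leaves $f_1,f_2$ untouched) secures the sign condition $b_{20}\geq0$.

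The decisive step is the normalization of the second component. For each fixed $u$ the map $v\mapsto f_2(u,v)$ has a non-degenerate critical point at $v=0$ with $f_{2vv}(\bm0)>0$, so the parametrized Morse lemma yields a source change $v\mapsto\tilde v(u,v)$, with $\tilde v(u,0)=0$, such that $f_2(u,v)=f_2(u,0)+\tfrac12\tilde v^2$ \emph{exactly}. This fixes $u$, hence preserves $f_1\equiv u$ and the whole cuspidal edge structure, and it eliminates all $v$-dependence of $f_2$ save the term $\tfrac12 v^2$; writing $f_2(u,0)=\tfrac{a_{20}}2u^2+\tfrac{a_{30}}6u^3+u^4h_1(u)$ finishes the second coordinate. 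Relabelling $\tilde v$ as $v$ and expanding $f_3$ — using $f_{3v}(u,0)\equiv0$ (no linear-$v$ terms) and $f_{3vv}(\bm0)=0$ (since $f_{vv}(\bm0)\perp\nu(\bm0)=e_3$ forces the $v^2$-coefficient to start at order $u$) — gives the displayed $\tfrac{b_{12}}2uv^2$ and $\tfrac{b_{03}}6v^3$ terms and packages the rest as the stated $h$. Finally the cuspidal edge criterion $\eta\lambda(\bm0)\neq0$ of Theorem \ref{criteria}(1), after computing $\lambda=\det(f_u,f_v,\nu)$ in this form, reads $b_{03}\neq0$.

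The main obstacle I anticipate is not any single computation but the admissibility bookkeeping: checking that each successive change — the $u$-reparametrization, then the Morse change in $v$, then the reflection — simultaneously preserves the straightened singular set $\{v=0\}$ and the null direction $\partial_v$ and does not spoil the components already normalized. The parametrized Morse lemma is the analytic heart of the argument, as it is what permits the \emph{exact}, not merely formal, removal of every $v$-term in $f_2$ beyond $v^2/2$, and therefore what dictates the rigid monomial shape of the remainder $h$.
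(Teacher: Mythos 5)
Your construction is, in substance, the right one (and it is essentially the argument of Martins--Saji, to which the paper defers: the paper gives no proof of this proposition, only the citation to \cite{MS}). The adapted coordinates, the Hadamard division $f=c(u)+v^2A(u)+v^3B(u,v)$, the target isometries fixing the frame at $\bm{0}$, the reparametrization $(u,v)\mapsto(f_1(u,v),v)$, and the parametrized Morse lemma making the second component exactly $f_2(u,0)+v^2/2$ all go through, and your bookkeeping that each change preserves $\{v=0\}$, the null direction $\partial_v$, and the previously normalized components is correct.

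The genuine gap is the last step: the claim that the criterion $\eta\lambda(\bm{0})\neq0$ of Theorem \ref{criteria}(1), computed in the derived form, ``reads $b_{03}\neq0$.'' It does not. In your coordinates $f_v=v\psi$ with $f_u(\bm{0})=e_1$, $\psi(\bm{0})=f_{vv}(\bm{0})=e_2$ and $\nu(\bm{0})=e_3$, so $\lambda=\det(f_u,f_v,\nu)=v\det(f_u,\psi,\nu)$ and hence $\eta\lambda(\bm{0})=\det(f_u,\psi,\nu)(\bm{0})=1$, regardless of $b_{03}$. The condition $\eta\lambda\neq0$ is automatic for every germ of the shape you have produced and cannot detect $b_{03}$; for instance $(u,v)\mapsto(u,v^2/2,uv^2)$ satisfies it but is not a cuspidal edge. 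What actually forces $b_{03}\neq0$ is the \emph{other} hypothesis of Theorem \ref{criteria}: that $f$ be a front. A cuspidal edge is a front, so $(f,\nu)$ is an immersion at $\bm{0}$; since $f_v(\bm{0})=\bm{0}$, this requires $\nu_v(\bm{0})\neq\bm{0}$, and a direct computation in your coordinates gives $\nu_v(\bm{0})=(0,-b_{03}/2,0)$, equivalently $\hat{N}(\bm{0})=-\langle\psi,\nu_v\rangle(\bm{0})=b_{03}/2$. This is precisely the fact the paper records after \eqref{psi1}: $f$ is a front near a first-kind singular point $q$ if and only if $\hat{N}(q)\neq0$, and $\hat{N}(\bm{0})=b_{03}/2$ for the normal form. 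The same conflation underlies your parenthetical remark that the independence of $f_u(\bm{0})$ and $f_{vv}(\bm{0})$ is ``exactly the cuspidal edge conditions'': that independence is only the $\eta\lambda\neq0$ half of the criterion; frontness is the other half, and it is exactly where $b_{03}\neq0$ lives. With the final step replaced by this frontness argument, your proof is complete.
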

We call this parametrization the \textit{normal form of cuspidal edges}.

\subsection{Principal curvatures and principal directions}
Let $f:V\to \bm{R}^3$ be a frontal and $p\in S(f)$ a singular point of the first kind, 
where $V\subset\bm{R}^2$ is a domain. 
A coordinate system $(U;u,v)$ centered at $p$ is called \textit{adapted} if it satisfies
\begin{enumerate}
 \item[1)] the $u$-axis is the singular curve,
 \item[2)] $\eta=\partial_v$ gives a null vector field on the $u$-axis, and
 \item[3)] there are no singular points other than the $u$-axis.
\end{enumerate}
We fix an adapted coordinate system $(U;u,v)$ centered at $p$.
In this case, there exists a map 
$\psi:U\to\bm{R}^3\setminus\{\bm{0}\}$ such that $f_v=v\psi$. 
$f_u$ and $\psi$ are linearly independent, 
and we can take a unit normal vector $\nu$ of $f$ as $\nu=f_u\times\psi/||f_u\times\psi||$. 
Thus we might regard the pair $\{f_u,\psi,\nu\}$ as a frame of $f$. 

We now set the following functions:
\begin{equation}
\begin{array}{l}
\hat{E}=\langle f_u,f_u \rangle,\ \hat{F}=\langle f_u,\psi \rangle,
\ \hat{G}=\langle \psi,\psi\rangle,\\
\hat{L}=-\langle f_u,\nu_u\rangle,\ \hat{M}=-\langle\psi,\nu_u\rangle,\ 
\hat{N}=-\langle\psi,\nu_v\rangle(=-\langle\psi,\eta\nu\rangle). \label{nu2}
\end{array}
\end{equation}
We note that $-\langle f_u,\nu_v\rangle=v\hat{M}$ holds. 
If $f$ is a normal form in \eqref{normal}, we have 
$\hat{E}=1,\hat{F}=0,\hat{G}=1,\hat{L}=b_{20},\hat{M}=b_{12},\hat{N}=b_{03}/2$ at $\bm{0}$. 
In particular,  
$\hat{E}\hat{G}-\hat{F}^2\neq0$ in a neighborhood of $\bm{0}$.  
We obtain the following.
\begin{lem}\label{wein1}
It holds that 
\begin{equation*}
\nu_u=\frac{\hat{F}\hat{M}-\hat{G}\hat{L}}{\hat{E}\hat{G}-\hat{F}^2}f_u+
\frac{\hat{F}\hat{L}-\hat{E}\hat{M}}{\hat{E}\hat{G}-\hat{F}^2}\psi,\ 
\nu_v=\frac{\hat{F}\hat{N}-v\hat{G}\hat{M}}{\hat{E}\hat{G}-\hat{F}^2}f_u+
\frac{v\hat{F}\hat{M}-\hat{E}\hat{N}}{\hat{E}\hat{G}-\hat{F}^2}\psi.
\end{equation*}
\end{lem}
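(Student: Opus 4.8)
The plan is to exploit the fact that $\nu$ is a unit vector field, so that its derivatives lie in the tangent plane spanned by $f_u$ and $\psi$, and then to pin down the coefficients of the expansion by solving a linear system governed by the first fundamental form. This is the standard Weingarten-type argument, adapted to the singular frame $\{f_u,\psi,\nu\}$.

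First I would note that differentiating $\langle\nu,\nu\rangle=1$ with respect to $u$ and $v$ yields $\langle\nu_u,\nu\rangle=\langle\nu_v,\nu\rangle=0$. Since $\{f_u,\psi,\nu\}$ is a frame with $\nu$ orthogonal to both $f_u$ and $\psi$, it follows that $\nu_u$ and $\nu_v$ lie in the plane spanned by $f_u$ and $\psi$. Hence I may write $\nu_u=af_u+b\psi$ and $\nu_v=cf_u+d\psi$ for suitable smooth functions $a,b,c,d$ on $U$, and the task reduces to determining these four functions.

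Next I would take inner products of each expansion with $f_u$ and $\psi$, reading off the right-hand sides from \eqref{nu2}. For $\nu_u$, using $\langle\nu_u,f_u\rangle=-\hat{L}$ and $\langle\nu_u,\psi\rangle=-\hat{M}$, this gives
\begin{equation*}
\begin{pmatrix}\hat{E} & \hat{F}\\ \hat{F} & \hat{G}\end{pmatrix}\begin{pmatrix}a\\ b\end{pmatrix}=\begin{pmatrix}-\hat{L}\\ -\hat{M}\end{pmatrix}.
\end{equation*}
For $\nu_v$, using the identity $\langle f_u,\nu_v\rangle=-v\hat{M}$ recorded just before the lemma together with $\langle\psi,\nu_v\rangle=-\hat{N}$, I obtain
\begin{equation*}
\begin{pmatrix}\hat{E} & \hat{F}\\ \hat{F} & \hat{G}\end{pmatrix}\begin{pmatrix}c\\ d\end{pmatrix}=\begin{pmatrix}-v\hat{M}\\ -\hat{N}\end{pmatrix}.
\end{equation*}
Both systems share the same coefficient matrix, namely the first fundamental form, whose determinant $\hat{E}\hat{G}-\hat{F}^2$ is nonzero in a neighborhood of $\bm{0}$ as already remarked.

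Finally I would solve each system by Cramer's rule, which produces $a=(\hat{F}\hat{M}-\hat{G}\hat{L})/(\hat{E}\hat{G}-\hat{F}^2)$ and $b=(\hat{F}\hat{L}-\hat{E}\hat{M})/(\hat{E}\hat{G}-\hat{F}^2)$ for $\nu_u$, and the analogous expressions for $c,d$ with the data $-v\hat{M},-\hat{N}$, yielding exactly the stated formulas. There is no serious obstacle in this argument; the one point deserving care is the identity $\langle f_u,\nu_v\rangle=-v\hat{M}$, which I would either quote from the remark preceding the lemma or re-derive by differentiating the orthogonality relations $\langle f_u,\nu\rangle=\langle f_v,\nu\rangle=0$ and substituting $f_v=v\psi$.
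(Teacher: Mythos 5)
Your proof is correct and is exactly the standard Weingarten-type argument that the paper implicitly relies on (the lemma is stated there without proof): decompose $\nu_u,\nu_v$ in the frame $\{f_u,\psi\}$ using $\langle\nu_u,\nu\rangle=\langle\nu_v,\nu\rangle=0$, pair with $f_u$ and $\psi$ to get linear systems with coefficient matrix $\bigl(\begin{smallmatrix}\hat{E}&\hat{F}\\ \hat{F}&\hat{G}\end{smallmatrix}\bigr)$, and invert. Your handling of the one delicate input, $\langle f_u,\nu_v\rangle=-v\hat{M}$, is also right, and all four Cramer's-rule coefficients match the stated formulas.
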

Let $K$ and $H$ denote the Gaussian curvature and the mean curvature of $f$. 
In this setting, we have
\begin{equation}
K=\frac{\hat{L}\hat{N}-v\hat{M}^2}{v(\hat{E}\hat{G}-\hat{F}^2)},\ \ 
H=\frac{\hat{E}\hat{N}-2v\hat{F}\hat{M}+v\hat{G}\hat{L}}{2v(\hat{E}\hat{G}-\hat{F}^2)}.\label{k_h_first}
\end{equation}
The behavior of these functions are studied in \cite{MSUY,SUY,SUY2}. 

We define the \textit{principal curvatures} and 
\textit{principal directions} for cuspidal edges. 
We recall the case of regular surfaces.
Let  $g:U\to \bm{R}^3$ be a regular surface, where $U\subset \bm{R}^2$ 
is a domain, and let
$E,F,G,L,M,N$ be the coefficients of the first and the second fundamental forms
 of $g$. 
A principal curvature $\kappa$ is a solution of
\begin{equation*}
(EG-F^2)\kappa^2-(EN-2FM+GL)\kappa+(LN-M^2)=0.\label{princi1}
\end{equation*}
Solving this equation, we get $2\kappa_1=(A+B)/(EG-F^2)$ and $2\kappa_2=(A-B)/(EG-F^2)$,
where $A=EN-2FM+GL$ and $B=\sqrt{A^2-4(EG-F^2)(LN-M^2)}$. 
These functions $\kappa_1,\kappa_2$ are smooth on $U$. 
The principal directions $(\xi_i,\zeta_i)\neq(0,0)$ corresponding to $\kappa_i\ (i=1,2)$ satisfy
\begin{equation}
\left(
\begin{array}{cc}
L &M  \\
 M&N 
\end{array}
\right)
\left(
\begin{array}{cc}
\xi_i \\
\zeta_i
\end{array}
\right)=\kappa_i
\left(
\begin{array}{cc}
E &F  \\
F &G 
\end{array}
\right)\left(
\begin{array}{cc}
\xi_i  \\
 \zeta_i
\end{array}
\right).\label{pdir1}
\end{equation}
We can take principal directions as 
$(\xi_1,\zeta_1)=(N-\kappa_1 G,-M+\kappa_1 F)$, $(\xi_2,\zeta_2)=(M-\kappa_1 F,N-\kappa_1 G)$. 

Next we consider the case of cuspidal edges. 
Let $f:V\to \bm{R}^3$ be a front and $p\in S(f)$ a singular point of the first kind.  
Let $(U;u,v)$ be an adapted coordinate system centered at $p$.
We define two functions
\begin{equation}
\hat{\kappa}_1=
\frac{\hat{A}+\hat{B}}
{2v(\hat{E}\hat{G}-\hat{F}^2)},\ 
\hat{\kappa}_2=
\frac{\hat{A}-\hat{B}}
{2v(\hat{E}\hat{G}-\hat{F}^2)}\label{cecurv}
\end{equation}
on $U$, where 
$
\hat{A}=\hat{E}\hat{N}-2v\hat{F}\hat{M}+v\hat{G}\hat{L},\ 
\hat{B}= \sqrt{\hat{A}^2-4v(\hat{E}\hat{G}-\hat{F}^2)(\hat{L}\hat{N}-v\hat{M}^2)}.
$
These are smooth on the regular set in $U$. 
However, one of these functions might be unbounded at singular points. 
By \eqref{k_h_first} and \eqref{cecurv}, we have 
$K=\hat{\kappa}_1\hat{\kappa}_2,\ 2H=\hat{\kappa}_1+\hat{\kappa}_2$. 
Thus we may regard $\hat{\kappa}_i\ (i=1,2)$ as principal curvatures of $f$, 
and $\hat{\kappa}_i$ can be rewritten as 
\begin{equation}
\hat{\kappa}_1=
\frac{2(\hat{L}\hat{N}-v\hat{M}^2)}
{\hat{A}-\hat{B}},\ 
\hat{\kappa}_2=
\frac{2(\hat{L}\hat{N}-v\hat{M}^2)}
{\hat{A}+\hat{B}}.\label{cecurv3}
\end{equation}
Here, $\hat{A}\pm\hat{B}=\hat{E}(\hat{N}\pm|\hat{N}|)$ 
hold along the $u$-axis. 
On the other hand, the function $\psi_{\mathrm{ccr}}$ in \eqref{psi1} is  
$$\psi_{\text{ccr}}(u)=\frac{||f_u(u,0)||^2(-\langle\psi,\eta\nu\rangle(u,0))}{||f_u(u,0)\times\psi(u,0)||}.$$
This implies that $f$ is a front near $q=(u,0)$ if and only if $-\langle\psi,\eta\nu\rangle(q)(=\hat{N}(q))$ 
does not vanish.
Thus we have the following.
\begin{prop}\label{converge}
Under the above setting, if $\hat{N}(q)$ is positive $($resp. negative$)$, then 
$\hat{\kappa}_2$ $($resp. $\hat{\kappa}_1$$)$ 
in \eqref{cecurv} can be extended as a $C^\infty$-function near a singular point $q$.
\end{prop}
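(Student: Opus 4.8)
The plan is to argue directly from the alternative expressions \eqref{cecurv3} for $\hat{\kappa}_1$ and $\hat{\kappa}_2$, since these have already cleared the vanishing factor $v$ from the denominators; the only remaining question is the behaviour of $\hat{A}\pm\hat{B}$ near $q$. Write $q=(u_0,0)$. Because $(U;u,v)$ is adapted, every term of $\hat{A}=\hat{E}\hat{N}-2v\hat{F}\hat{M}+v\hat{G}\hat{L}$ other than the first carries a factor $v$, so $\hat{A}(q)=\hat{E}(q)\hat{N}(q)$, and the radicand
$$\hat{A}^2-4v(\hat{E}\hat{G}-\hat{F}^2)(\hat{L}\hat{N}-v\hat{M}^2)$$
takes the value $\hat{A}(q)^2=\bigl(\hat{E}(q)\hat{N}(q)\bigr)^2$ at $q$.

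The first key step is to show that $\hat{B}$ is smooth, not merely continuous, near $q$. I would note that $\hat{E}(q)=\langle f_u,f_u\rangle(q)>0$ and $\hat{N}(q)\neq0$ by hypothesis, so the radicand equals $\bigl(\hat{E}(q)\hat{N}(q)\bigr)^2>0$ at $q$; being a smooth function, it stays positive on a neighborhood of $q$, whence $\hat{B}=\sqrt{\,\cdot\,}$ is $C^\infty$ there. In particular $\hat{B}(q)=|\hat{A}(q)|=\hat{E}(q)|\hat{N}(q)|$.

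Next I would split into cases on the sign of $\hat{N}(q)$. If $\hat{N}(q)>0$, then $\hat{A}(q)=\hat{E}(q)\hat{N}(q)>0$, so $(\hat{A}+\hat{B})(q)=2\hat{A}(q)>0$, and by continuity $\hat{A}+\hat{B}$ is nonvanishing near $q$. The expression $\hat{\kappa}_2=2(\hat{L}\hat{N}-v\hat{M}^2)/(\hat{A}+\hat{B})$ from \eqref{cecurv3} then exhibits $\hat{\kappa}_2$ as a quotient of smooth functions with nowhere-vanishing denominator, hence $C^\infty$ near $q$. If instead $\hat{N}(q)<0$, the same computation gives $(\hat{A}-\hat{B})(q)=2\hat{A}(q)<0$, so $\hat{A}-\hat{B}$ is nonvanishing near $q$, and $\hat{\kappa}_1=2(\hat{L}\hat{N}-v\hat{M}^2)/(\hat{A}-\hat{B})$ is $C^\infty$ near $q$; this matches the statement.

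The main obstacle is the second step, verifying smoothness of $\hat{B}$ itself. This is exactly where the front condition enters: the identity $\psi_{\mathrm{ccr}}(u)=\|f_u\|^2\hat{N}/\|f_u\times\psi\|$ shows that $\hat{N}(q)\neq0$ is equivalent to $f$ being a front near $q$, and it is the strict positivity of the radicand at $q$ (forced by $\hat{N}(q)\neq0$ together with $\hat{E}(q)>0$) that keeps the square root away from its non-smooth locus. Once this is in hand, the case analysis is elementary, and crucially no further cancellation of the factor $v$ is required, since \eqref{cecurv3} has already absorbed it via $\hat{A}^2-\hat{B}^2=4v(\hat{E}\hat{G}-\hat{F}^2)(\hat{L}\hat{N}-v\hat{M}^2)$.
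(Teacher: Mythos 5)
Your proposal is correct and takes essentially the same route as the paper: the paper likewise works from the rewritten expressions \eqref{cecurv3}, notes that $\hat{A}\pm\hat{B}=\hat{E}(\hat{N}\pm|\hat{N}|)$ along the $u$-axis, and concludes that the relevant denominator is nonvanishing near $q$ precisely when $\hat{N}(q)$ has the stated sign. Your explicit verification that the radicand equals $\bigl(\hat{E}(q)\hat{N}(q)\bigr)^2>0$ at $q$, so that $\hat{B}$ is $C^\infty$ near $q$, merely spells out a step the paper leaves implicit.
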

We note that the principal curvature maps for fronts are defined and discussed their behavior in \cite{MU}. 
For details of the principal curvature maps, see \cite{MU}.

By the construction, if $\hat{N}$ is positive (resp. negative) along the $u$-axis, 
$\hat{\kappa}_2$ (resp. $\hat{\kappa}_1$) can be regarded as the \textit{principal curvature} of the cuspidal edge.
We assume that $\hat{\kappa}_2$ is a smooth on $U$ in the following.
Let $\hat{\bm{v}}=(\xi,\zeta)$ be the principal direction corresponding to $\hat{\kappa}_2$. 
In this case, equation (\ref{pdir1}) is
\begin{equation}
\left(
\begin{array}{cc}
\hat{L} &v\hat{M}  \\
v\hat{M}&v\hat{N} 
\end{array}
\right)
\left(
\begin{array}{cc}
\xi \\ 
\zeta
\end{array}
\right)=
\hat{\kappa}_2
\left(
\begin{array}{cc}
\hat{E} &v\hat{F} \\
v\hat{F} &v^2\hat{G}
\end{array}
\right)
\left(
\begin{array}{cc}
\xi \\ 
\zeta
\end{array}
\right).\label{cedir1}
\end{equation}
We can factor out $v$ from \eqref{cedir1} and obtain 
\begin{equation}
\left(
\begin{array}{cc}
\hat{L}-\hat{\kappa}_2\hat{E} &v(\hat{M}-\hat{\kappa}_2\hat{F})  \\
v(\hat{M}-\hat{\kappa}_2\hat{F})&v(\hat{N}-v\hat{\kappa}_2\hat{G}) 
\end{array}
\right)
\left(
\begin{array}{cc}
\xi \\ 
\zeta
\end{array}
\right)=\left(
\begin{array}{cc}
0 \\ 
0
\end{array}
\right).\label{cedir2}
\end{equation}
Setting 
$\hat{\bm{v}}=(\xi,\zeta)=(\hat{N}-v\hat{\kappa}_2\hat{G},-\hat{M}+\hat{\kappa}_2\hat{F})$, 
this satisfies equation (\ref{cedir2}).
Since $\hat{N}$ is a non-zero function on the $u$-axis, $\hat{\bm{v}}$ is non-zero on $U$. 
This implies that $\hat{\bm{v}}$ can be regarded as the principal direction with respect to $\hat{\kappa_2}$. 

\subsection{Ridge points}
In this section, we introduce a notion called \textit{ridge points} for cuspidal edges. 
First we recall the definition of ridge points and sub-parabolic points for regular surfaces (\cite{FH,P1}).
Let $g:U\to\bm{R}^3$ be a regular surface which has no umbilic point, and
$\kappa_i$ the principal curvatures of $g$ and 
$\bm{v}_i$ the principal directions with respect to $\kappa_i\ (i=1,2)$. 
The point $g(p)$ is called a \textit{ridge point} relative to $\bm{v}_i$ if 
$\bm{v}_i\kappa_i(p)=0$, where $\bm{v}_i\kappa_i$ is the directional derivative of 
$\kappa_i$ by $\bm{v}_i$. Moreover, $g(p)$ is called a \textit{k-th order ridge point} 
relative to $\bm{v}_i$ if $\bm{v}_i^{(m)}\kappa_i(p)=0\ (1\leq m\leq k)$ and 
$\bm{v}_i^{(k+1)}\kappa_i(p)\neq0$, where $\bm{v}_i^{(m)}\kappa_i$ is the 
directional derivative of $\kappa_i$ by $\bm{v}_i$ applied $m$ times. 
In addition, the point $g(p)$ is called a \textit{sub-parabolic point} relative to $\bm{v}_j$ 
if $\bm{v}_j\kappa_i(p)=0\ (i\neq j)$ holds. 

Let $f:V\to \bm{R}^3$ be a cuspidal edge and 
$(U;u,v)$ be an adapted coordinate system centered at $p\in S(f)$. 
Assume that $\hat{N}$ is positive on the $u$-axis. 
Then by \eqref{cedir2}, we can take $\hat{\bm{v}}=\xi\partial_u+\zeta\partial_v$ as the principal direction 
relative to the principal curvature $\hat{\kappa}_2$, where 
$(\xi,\zeta)=(\hat{N}-v\hat{\kappa}_2\hat{G},-\hat{M}+\hat{\kappa}_2\hat{F})$. 
We define the \textit{ridge points for a cuspidal edge} as follows: 
\begin{dfn}
Under the above setting, the point 
$f(p)$ is called a \textit{ridge point for $f$} relative to $\hat{\bm{v}}$ 
if $\hat{\bm{v}}\hat{\kappa}_2(p)=0$. 
Moreover, $f(p)$ is called 
a \textit{k-th order ridge point for $f$} relative to $\hat{\bm{v}}$ if 
$\hat{\bm{v}}^{(m)}\hat{\kappa}_2(p)=0\ (1\leq m\leq k)$ and 
$\hat{\bm{v}}^{(k+1)}\hat{\kappa}_2(p)\neq0$.
\end{dfn}
\begin{lem}\label{ridge}
Let $f:U\to \bm{R}^3$ be the normal form \eqref{normal} of a cuspidal edge. 
Assume that 
$\hat{\kappa}$ is the principal curvature which extends as a $C^\infty$-function near $\bm{0}$ and 
$\hat{\bm{v}}$ is the principal direction corresponding to $\hat{\kappa}$. 
Then $\bm{0}$ is a first order ridge point if and only if
 \begin{align}
&4b_{12}^3+b_{30}b_{03}^2=0,\label{ridge1}\\
&-2b_{20}^3b_{03}^4-3b_{20}(4b_{12}^2+a_{20}b_{03}^2)^2
+24\left(b_{03}^4h_2(0)
+4b_{12}^2b_{03}^2h_3(0)
-8b_{12}^3b_{03}h_4(0)+16b_{12}^4h_5(\bm{0})\right)\neq 0.\label{ridge2}
\end{align}
\end{lem}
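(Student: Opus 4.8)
The plan is to compute everything explicitly in the normal form coordinates, since Lemma~\ref{ridge} is stated precisely for the normal form \eqref{normal}. The ridge condition $\hat{\bm{v}}\hat{\kappa}(\bm{0})=0$ and the first-order condition $\hat{\bm{v}}^{(2)}\hat{\kappa}(\bm{0})\neq0$ are both directional-derivative conditions on the smooth principal curvature $\hat{\kappa}$, so I must first obtain Taylor expansions at $\bm{0}$ of the frame quantities $\hat{E},\hat{F},\hat{G},\hat{L},\hat{M},\hat{N}$ to sufficient order. First I would differentiate the normal form to get $f_u$ and $\psi=f_v/v$ (using that $f_v$ is divisible by $v$ here), then form $\nu=f_u\times\psi/\|f_u\times\psi\|$ and its derivatives, and expand the six fundamental coefficients. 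From the excerpt we already know the values at $\bm{0}$ ($\hat E=\hat G=1$, $\hat F=0$, $\hat L=b_{20}$, $\hat M=b_{12}$, $\hat N=b_{03}/2$), so I need the next one or two orders in $u$ and $v$, which is where the coefficients $a_{30},b_{30},b_{12},b_{03}$ and the higher functions $h_2,h_3,h_4,h_5$ at $\bm{0}$ will enter.

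Next I would assemble $\hat{\kappa}=\hat{\kappa}_2$ (the smooth branch, assuming $\hat N(\bm 0)=b_{03}/2>0$, with the other sign case symmetric) from \eqref{cecurv3} in the form $\hat\kappa=2(\hat L\hat N-v\hat M^2)/(\hat A+\hat B)$, which is manifestly smooth since $\hat A+\hat B$ is a nonvanishing denominator near $\bm 0$ when $\hat N>0$. The principal direction is $\hat{\bm v}=\xi\partial_u+\zeta\partial_v$ with $(\xi,\zeta)=(\hat N-v\hat\kappa\hat G,-\hat M+\hat\kappa\hat F)$ as given just before the lemma. At $\bm{0}$ this is $\hat{\bm v}(\bm 0)=(b_{03}/2)\partial_u$, so to leading order the ridge derivative $\hat{\bm v}\hat\kappa$ is proportional to $\hat\kappa_u$. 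I would therefore compute $\hat\kappa_u(\bm 0)$; the vanishing of this quantity should reduce, after clearing the nonzero factor $b_{03}$, to the first stated condition \eqref{ridge1}, namely $4b_{12}^3+b_{30}b_{03}^2=0$.

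For the second condition I would expand $\hat{\bm v}^{(2)}\hat\kappa=\hat{\bm v}(\hat{\bm v}\hat\kappa)$, being careful that $\hat{\bm v}$ itself has $u$- and $v$-dependent coefficients so the second application produces both $\xi^2\hat\kappa_{uu}$-type terms and terms where $\hat{\bm v}$ differentiates $\xi,\zeta$. Evaluating this at $\bm 0$ under the assumption that \eqref{ridge1} already holds (so that $\hat{\bm v}\hat\kappa(\bm 0)=0$), I expect the expression to collapse to a multiple of the left-hand side of \eqref{ridge2}; the cubic and quartic combinations of $b_{20},b_{12},a_{20},b_{03}$ together with the $h_2(0),h_3(0),h_4(0),h_5(\bm 0)$ terms strongly suggest that the second-order $v$-behavior of $f$ (the $h$-part) feeds in precisely here, since lower derivatives of $\hat\kappa$ only see the polynomial part.

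The main obstacle will be the sheer bookkeeping of the Taylor expansions: $\hat\kappa$ is a quotient involving the square root $\hat B$, so even though the smooth branch is $2(\hat L\hat N-v\hat M^2)/(\hat A+\hat B)$, extracting the second derivative in the $\hat{\bm v}$ direction at $\bm 0$ requires tracking several orders of the numerator and denominator and of the frame coefficients simultaneously. I expect to use \eqref{ridge1} as a simplifying substitution partway through to keep the final expression manageable, and to verify the weights by a symbolic computation. The structural points, that $\hat{\bm v}(\bm 0)\parallel\partial_u$ and that $\hat B$ stays smooth because $\hat N$ does not vanish, are what make the computation finite and make the two clean conditions \eqref{ridge1}–\eqref{ridge2} emerge.
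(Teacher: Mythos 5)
Your overall strategy (direct Taylor computation of $\hat E,\hat F,\hat G,\hat L,\hat M,\hat N$ in the normal form, the smooth branch of $\hat\kappa_2$ via \eqref{cecurv3}, the chain-rule expansion of $\hat{\bm v}^{(2)}\hat\kappa_2$ including the terms where $\hat{\bm v}$ differentiates $\xi,\zeta$, and substituting \eqref{ridge1} at the end) is exactly the paper's method. However, there is a genuine error at the pivotal step: you claim $\hat{\bm v}(\bm 0)=(b_{03}/2)\partial_u$, i.e.\ that the $\partial_v$-component vanishes at the origin. It does not. Since $\hat F(\bm 0)=0$ and $\hat M(\bm 0)=b_{12}$, we have $\zeta(\bm 0)=-\hat M(\bm 0)+\hat\kappa_2(\bm 0)\hat F(\bm 0)=-b_{12}$, so
\begin{equation*}
\hat{\bm v}(\bm 0)=\frac{b_{03}}{2}\,\partial_u-b_{12}\,\partial_v ,
\qquad
\hat{\bm v}\hat\kappa_2(\bm 0)=\frac{b_{03}}{2}(\hat\kappa_2)_u(\bm 0)-b_{12}(\hat\kappa_2)_v(\bm 0).
\end{equation*}
With $(\hat\kappa_2)_u(\bm 0)=b_{30}-a_{20}b_{12}$ and $(\hat\kappa_2)_v(\bm 0)=-(4b_{12}^2+a_{20}b_{03}^2)/(2b_{03})$, the two $a_{20}$-terms cancel and one gets $\hat{\bm v}\hat\kappa_2(\bm 0)=(4b_{12}^3+b_{30}b_{03}^2)/(2b_{03})$, which is \eqref{ridge1}. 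Under your simplification the ridge condition would instead read $(\hat\kappa_2)_u(\bm 0)=b_{30}-a_{20}b_{12}=0$, which is a different equation and is not equivalent to \eqref{ridge1}; the computation you outline would therefore not reproduce the stated lemma. The cancellation of $a_{20}$ coming from the cross term $-b_{12}(\hat\kappa_2)_v$ is precisely what makes \eqref{ridge1} independent of $a_{20}$, so that term cannot be discarded.

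The same omission would corrupt the second-order condition: the contributions $2\xi\zeta(\hat\kappa_2)_{uv}$, $\zeta^2(\hat\kappa_2)_{vv}$ and the terms $\zeta(\xi)_v(\hat\kappa_2)_u$, $\zeta(\zeta)_v(\hat\kappa_2)_v$ all carry a factor $\zeta(\bm 0)=-b_{12}$, and it is these that produce the $-8b_{12}^3b_{03}h_4(0)+16b_{12}^4h_5(\bm 0)$ and $4b_{12}^2b_{03}^2h_3(0)$ parts of \eqref{ridge2}. If $\zeta(\bm 0)$ were truly zero, no such $b_{12}$-weighted $h$-terms could appear, so their presence in the statement is itself a signal that your evaluation of $\hat{\bm v}(\bm 0)$ must be revisited. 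Once you correct $\zeta(\bm 0)=-b_{12}$ and keep both components throughout, your plan coincides with the paper's proof and goes through.
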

\begin{proof}
Without loss of generality, we assume $\hat{N}$ is positive near $\bm{0}$.
Direct computations show 
$(\hat{\kappa}_2)_u(\bm{0})=b_{30}-a_{20}b_{12},\ 
 (\hat{\kappa}_2)_v(\bm{0})=-(4b_{12}^3+a_{20}b_{03}^2)/2b_{03}$, 
 $\xi(\bm{0})=b_{03}/2$ and $\zeta(\bm{0})=-b_{12}$. 
Hence we get  
$
\hat{\bm{v}}\hat{\kappa}_2(\bm{0})=(4b_{12}^3+b_{30}b_{03}^2)/(2b_{03}),
$ 
which shows \eqref{ridge1}.
Again direct computations shows 
$\xi_u(\mathbf{0})=3h_4(0),\xi_v(\mathbf{0})=-b_{20}+8h_5(\bm{0}),
\zeta_u(\mathbf{0})=a_{20}b_{20}-4h_3(0),\zeta_v(\mathbf{0})=-3h_4(0)$. 
Moreover, we have
\begin{align*}
(\hat{\kappa}_2)_{uu}(\mathbf{0})&=-2\left(a_{20}^2b_{20}+b_{20}^3+ a_{30}b_{12}-12h_2(0)+2a_{20}h_3(0)\right),\\
(\hat{\kappa}_2)_{uv}(\mathbf{0})&=\frac{1}{2b_{03}^2}\left(-a_{30}b_{03}^3+8b_{12}(-4b_{03}h_3(0)+3b_{12}h_4(0)) + 
 2a_{20}b_{03}(4b_{20}b_{12}-3b_{03}h_4(0))\right),\\
(\hat{\kappa}_2)_{vv}(\mathbf{0})&=\frac{4}{b_{03}^2}
\left(-2b_{20}b_{12}^2-6b_{12}b_{03}h_4(0)+16b_{12}^2h_5(\bm{0})+b_{03}^2(h_3(0)-2a_{20}h_5(\bm{0}))\right).
\end{align*}
Since $\hat{\bm{v}}^{(2)}\hat{\kappa}_2=(\xi(\xi)_u+\zeta(\xi)_v)(\hat{\kappa}_2)_u
+(\xi(\zeta)_u+\zeta(\zeta)_v)(\hat{\kappa}_2)_v
 +\xi^2(\hat{\kappa}_2)_{uu}+2\xi\zeta(\hat{\kappa}_2)_{uv}+\zeta^2(\hat{\kappa}_2)_{vv}$ and 
$b_{30}=-4b_{12}^3/b_{03}^2$ hold, we have completed the proof.
\end{proof}

\section{Parallel surfaces of cuspidal edges and their singularities}
In this section, we consider 
parallel surfaces of cuspidal edges and their singularities. First we recall the parallel surfaces of 
regular surfaces and their singularities. Here we treat the swallowtail singularity. In \cite{FH}, 
how other singularities appeared in the parallel surfaces of regular surfaces was discussed. 

\subsection{Parallel surfaces of regular surfaces}
Let $g:U\to \bm{R}^3$ be a regular surface which has no umbilic point, 
$\nu_g$ the unit normal vector of $g$, and
$\kappa_i\ (i=1,2)$ principal curvatures of $g$. 
The parallel surface $g_t$ of $g$ is defined by 
$
g_t(u,v)=g(u,v)+t\nu_g(u,v),
$
where $t\in \bm{R}$ is a constant. 
We can take  $\nu_g$ as 
the unit normal vector along $g_t$. 
Thus $g_t$ is a front. 
We set  
$
\lambda_{g_t}=\det ((g_t)_u,(g_t)_v,\nu_g).
$  
Using the Weingarten formula and $\kappa_i$, we can rewrite $\lambda_{g_t}$ as
\begin{equation}
\lambda_{g_t}=(1-t\kappa_1)(1-t\kappa_2)\det (g_u,g_v,\nu_g).
\end{equation}
Since $\det(g_u,\ g_v,\ \nu_g)(p)\neq0$ for any $p\in U$, 
we have $S(g_t)=\{p\in U\ |\ t=1/\kappa_i(p),\ i=1,2\}$.
The following relations are known (cf. \cite[Theorem 3.4]{FH}).
\begin{thm}[\cite{FH}]
Let $g:U\to\bm{R}^3$ be a regular surface and $g_{t}$ a parallel surface of $g$, 
where $t=1/\kappa_i(p)$. Then $f_{t_0}$ at $p$ is a swallowtail if and only if 
$p$ is a first order ridge with respect to $\bm{v}_i$ and not a sub-parabolic point relative to $\bm{v}_j$, 
where $\bm{v}_i$ is the principal direction with respect to $\kappa_i$. 
\end{thm}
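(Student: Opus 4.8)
The plan is to reduce the problem to the known criterion for swallowtails in Theorem~\ref{criteria} by computing the signed area density $\lambda_{g_t}$, its null vector field $\eta$, and the derivatives $\eta\lambda_{g_t}$ and $\eta\eta\lambda_{g_t}$ at the singular point $p$. Fix $i$ and set $t=1/\kappa_i(p)$. From the factorization $\lambda_{g_t}=(1-t\kappa_1)(1-t\kappa_2)\det(g_u,g_v,\nu_g)$ and the fact that $\det(g_u,g_v,\nu_g)$ is nowhere zero, the singular set of $g_t$ near $p$ is exactly the zero set of $1-t\kappa_i$, so $p$ is non-degenerate precisely when $d(1-t\kappa_i)(p)=-t\,d\kappa_i(p)\neq 0$, i.e. when $\nabla\kappa_i(p)\neq\bm{0}$ (which holds away from umbilics). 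The singular direction is tangent to the level curve $\{1-t\kappa_i=0\}$, hence orthogonal to $\nabla\kappa_i$, while the null vector field $\eta$ of $g_t$ should be identified with the principal direction $\bm{v}_i$: indeed $(g_t)_u,(g_t)_v$ are obtained from $g_u,g_v$ by applying $I-t\,W$, where $W$ is the Weingarten map, and on the singular set $I-t\,W$ annihilates precisely the $\kappa_i$-eigendirection $\bm{v}_i$. So I would first establish $\eta\equiv\bm{v}_i$ along $S(g_t)$.

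Next I would compute $\eta\lambda_{g_t}$. Writing $\lambda_{g_t}=(1-t\kappa_i)\,\Lambda$ with $\Lambda=(1-t\kappa_j)\det(g_u,g_v,\nu_g)$ nonvanishing at $p$ (here $j\neq i$; note $1-t\kappa_j=1-\kappa_j/\kappa_i\neq 0$ away from umbilics), the product rule gives $\eta\lambda_{g_t}=\bm{v}_i(1-t\kappa_i)\cdot\Lambda=-t\,(\bm{v}_i\kappa_i)\,\Lambda$ at $p$. Since $t\neq 0$ and $\Lambda(p)\neq 0$, the cuspidal-edge condition $\eta\lambda_{g_t}(p)\neq 0$ is equivalent to $\bm{v}_i\kappa_i(p)\neq 0$, i.e. $p$ is \emph{not} a ridge point. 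Conversely, $p$ is a swallowtail candidate precisely when $\eta\lambda_{g_t}(p)=0$, which forces $\bm{v}_i\kappa_i(p)=0$, i.e. $p$ is a ridge point relative to $\bm{v}_i$. The swallowtail criterion then demands the second condition $\eta\eta\lambda_{g_t}(p)\neq 0$.

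For the second-order condition I would differentiate again. Using $\eta\lambda_{g_t}=-t(\bm{v}_i\kappa_i)\Lambda$ as a function on $U$ and evaluating $\eta$ of it at the ridge point $p$, the term where $\eta$ hits $\Lambda$ vanishes (because $\bm{v}_i\kappa_i(p)=0$), leaving $\eta\eta\lambda_{g_t}(p)=-t\,\bigl(\bm{v}_i^{(2)}\kappa_i\bigr)(p)\,\Lambda(p)$ up to the extra contribution from differentiating the vector field $\eta=\bm{v}_i$ itself; that extra term is again proportional to $\bm{v}_i\kappa_i(p)=0$ and drops out. Hence $\eta\eta\lambda_{g_t}(p)\neq 0$ reduces to $\bm{v}_i^{(2)}\kappa_i(p)\neq 0$, which is exactly the statement that $p$ is a \emph{first order} (and not higher order) ridge point. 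The \emph{non-sub-parabolic} hypothesis $\bm{v}_j\kappa_i(p)\neq 0$ enters to guarantee that the full gradient $\nabla\kappa_i(p)$ is nonzero in a way transverse to the singular curve, ensuring $\eta$ is genuinely transverse to $\gamma'$ (i.e. $p$ is of the first kind) and that the factorization $\Lambda(p)\neq 0$ remains valid; I would verify that sub-parabolicity is precisely the degeneracy that would make the singular point fail to be of the first kind or would obstruct non-degeneracy of the swallowtail. The main obstacle I anticipate is the careful bookkeeping in the second derivative: one must confirm that differentiating the \emph{non-normalized} eigenvector field $\bm{v}_i$ contributes only terms proportional to first-order ridge quantities that vanish at $p$, so that $\eta\eta\lambda_{g_t}(p)$ cleanly captures $\bm{v}_i^{(2)}\kappa_i(p)$ and nothing else — together with confirming the exact role of the sub-parabolic condition in distinguishing swallowtails from other degenerations.
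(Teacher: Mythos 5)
Your overall strategy---reduce to the swallowtail criterion of Theorem~\ref{criteria}~(2) via the factorization of the signed area density, identify the null vector field of $g_t$ along $S(g_t)$ with the principal direction $\bm{v}_i$, and compute $\eta\lambda_{g_t}(p)=-t\,(\bm{v}_i\kappa_i)(p)\,\Lambda(p)$ and, at a ridge point, $\eta\eta\lambda_{g_t}(p)=-t\,(\bm{v}_i^{(2)}\kappa_i)(p)\,\Lambda(p)$---is sound, and it is exactly how this paper proves its cuspidal-edge analogue, Theorem~\ref{relation} (note the paper does not prove the present statement at all; it quotes it from \cite{FH}). Your first- and second-order computations are correct: treating $\eta\lambda_{g_t}=-t(\bm{v}_i\kappa_i)\Lambda$ as an identity on all of $U$ is literally false, but harmless, since the omitted term $(1-t\kappa_i)\,\eta\Lambda$ and its $\eta$-derivative both vanish at a ridge point on $S(g_t)$.

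The genuine gap is your handling of the sub-parabolic condition, which is the one place where the ``if and only if'' actually needs work. First, your parenthetical claim that $\nabla\kappa_i(p)\neq\bm{0}$ ``holds away from umbilics'' is false: a principal curvature can be critical at a non-umbilic point, and this happens precisely when $p$ is simultaneously a ridge point and a sub-parabolic point; if your claim were true, the sub-parabolic hypothesis would be redundant and the theorem would read ``swallowtail iff first order ridge'', which is not the statement. Second, the roles you then offer for $\bm{v}_j\kappa_i(p)\neq 0$ are wrong on two of three counts: it has nothing to do with $\Lambda(p)\neq0$ (that is non-umbilicity, $\kappa_i(p)\neq\kappa_j(p)$), and it does not make $\eta$ transverse to $\gamma'$---on the contrary, at a swallowtail $\eta\lambda_{g_t}(p)=0$ forces the null direction to be \emph{tangent} to the singular curve, so $p$ is of the second kind, not the first. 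The correct statement, which your argument needs and never establishes, is this: since $\{\bm{v}_i,\bm{v}_j\}$ is a frame near $p$ and $d\lambda_{g_t}(p)=-t\,\Lambda(p)\,d\kappa_i(p)$, at a ridge point (where $\bm{v}_i\kappa_i(p)=0$) the non-degeneracy condition $d\lambda_{g_t}(p)\neq0$ demanded by Theorem~\ref{criteria} is equivalent to $\bm{v}_j\kappa_i(p)\neq0$, i.e.\ to $p$ not being sub-parabolic relative to $\bm{v}_j$. With that lemma inserted, both directions of the equivalence go through; without it, your proof establishes neither the necessity nor the sufficiency of the sub-parabolic hypothesis.
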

\subsection{Parallel surfaces of cuspidal edges}
Let $f:V\to \bm{R}^3$ be a front, $f$ at $p\in V$ a cuspidal edge and $\nu$ a unit normal vector field of $f$.  
Take an adapted coordinate system $(U;u,v)$ centered at $p$. 
We assume that $\hat{\kappa}_2$ is smooth on $U$ (i.e. $\hat{N}>0$ on the $u$-axis) and non-zero at $p\in S(f)$. 
We define the parallel surface of $f$ as 
$
f_t(u,v)=f(u,v)+t\nu(u,v),
$ 
where $t\in \bm{R}\setminus\{0\}$ is a constant. 
We can also take $\nu$ as the unit normal vector field along $f_t$. 
The signed area density for $f_t$ is 
$
\lambda_t(u,v)=\det((f_t)_u,(f_t)_v,\nu)(u,v).
$ 
Using $\hat{\kappa}_i$, $\lambda_t$ can be rewritten as 
\begin{equation}
\lambda_t=(v-tv\hat{\kappa}_1)(1-t\hat{\kappa}_2)\det(f_u,\psi,\nu).
\end{equation}
We note that by \eqref{cecurv}, $v\hat{\kappa}_1$ is non-zero smooth function on the $u$-axis and 
$v\hat{\kappa}_1$ is proportinal to $\hat{N}+|\hat{N}|$ along the $u$-axis. 
We set $\hat{\lambda}_t=(1-t\hat{\kappa}_2)$. 
Then $S(f_t)=\{\hat{\lambda}_t=0\}$, and $p$ is a singular point of $f_t$ if and only if $t=1/\hat{\kappa}_2(p)$. 
If $f$ is a normal form (\ref{normal}) of a cuspidal edge, then $\hat{\lambda}_t(\mathbf{0})=1-b_{20}t$. 
Hence we have $t=1/b_{20}=1/\hat{\kappa}_2(\bm{0})$. 
We fix $t_0=1/\hat{\kappa}_2(p)$. 
Then $S(f_{t_0})=\{(u,v)\in U\ |\ \hat{\kappa}_2(u,v)=\hat{\kappa}_2(p)(=1/t_0)\}$. 
Moreover, we see that $p\in S(f_{t_0})$ is non-degenerate 
if and only if $d\hat{\lambda}_{t_0}(p)=-t_0d\hat{\kappa}_2(p)\neq0$. 
We consider the condition that $f_{t_0}$  at $p$ is a swallowtail. 

To apply Theorem \ref{criteria} (2), we need to determine the null vector field relative to $f_{t_0}$. 
We set $\eta_{t_0}=\ell_1(u,v)\partial_u+\ell_2(u,v)\partial_v$, 
where $\ell_i(u,v)\ (i=1,2)$ are functions on $U$.
By Lemma \ref{wein1}, 
\begin{multline}
df_{t_0}(\eta_{t_0})=\left[\ell_1\left(1+t_0\frac{\hat{F}\hat{M}-\hat{G}\hat{L}}{\hat{E}\hat{G}-\hat{F}^2}\right)+
\ell_2t_0\frac{\hat{F}\hat{N}-v\hat{G}\hat{M}}{\hat{E}\hat{G}-\hat{F}^2}\right]f_u\\
+\left[\ell_1t_0\frac{\hat{F}\hat{L}-\hat{E}\hat{M}}{\hat{E}\hat{G}-\hat{F}^2}+
\ell_2\left(v+t_0\frac{v\hat{F}\hat{M}-\hat{E}\hat{N}}{\hat{E}\hat{G}-\hat{F}^2}\right)\right]\psi \label{eta}.
\end{multline}
We set $\ell_1=\hat{N}-v\hat{\kappa}_2\hat{G},\  \ell_2=-\hat{M}+\hat{\kappa}_2\hat{F}$. 
Then $df_{t_0}(\eta_{t_0})=\bm{0}$ holds for any point $(u,v)\in S(f_{t_0})$. 
Thus we can take the null vector field  $\eta_{t_0}$ as $\bm{\hat{v}}$.  
By Theorem \ref{criteria}, we obtain the following:
\begin{thm}\label{relation}
Let $f:V\to \bm{R}^3$ be a smooth map and $f$ at $p\in V$ a cuspidal edge. 
Suppose that $\hat{\kappa}_2$ can be extended as a smooth function near $p$. 
Then the parallel surface $f_{t_0}$ of $f$, where $t_0=1/\hat{\kappa}_2(p)$, 
has a swallowtail at $p$ if and only if $d\hat{\kappa}_2(p)\neq0$ and $p$ is a first 
order ridge point of the initial surface $f$. 
\end{thm}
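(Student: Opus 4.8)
The plan is to apply the swallowtail criterion of Theorem \ref{criteria}(2) to the parallel surface $f_{t_0}$, which is a front with Gauss map $\nu$, using the null vector field $\eta_{t_0}=\hat{\bm{v}}$ already identified and a convenient factorization of its signed area density. First I would write $\lambda_{t_0}=\hat{\lambda}_{t_0}\,C$, where $\hat{\lambda}_{t_0}=1-t_0\hat{\kappa}_2$ and $C=(v-t_0 v\hat{\kappa}_1)\det(f_u,\psi,\nu)$. The point of this splitting is that $C$ is a nonvanishing smooth factor near $p$: the product $v\hat{\kappa}_1$ extends smoothly and, since $\hat{N}>0$ on the $u$-axis, is nonzero there, while $\det(f_u,\psi,\nu)$ never vanishes; hence $C(p)\neq0$. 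At the same time $\hat{\lambda}_{t_0}(p)=1-t_0\hat{\kappa}_2(p)=0$ by the choice $t_0=1/\hat{\kappa}_2(p)$, so $S(f_{t_0})=\{\hat{\lambda}_{t_0}=0\}$ near $p$ and the whole singularity analysis is governed by $\hat{\lambda}_{t_0}$.

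Next I would dispose of non-degeneracy. Differentiating $\lambda_{t_0}=\hat{\lambda}_{t_0}\,C$ and using $\hat{\lambda}_{t_0}(p)=0$ gives $d\lambda_{t_0}(p)=C(p)\,d\hat{\lambda}_{t_0}(p)=-t_0 C(p)\,d\hat{\kappa}_2(p)$. Since $t_0\neq0$ and $C(p)\neq0$, the singular point $p$ of $f_{t_0}$ is non-degenerate if and only if $d\hat{\kappa}_2(p)\neq0$. A swallowtail is a non-degenerate singular point, so this recovers the first condition of the theorem in the forward direction and is exactly what we assume in the converse.

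For the core of the argument I would compute the two directional derivatives appearing in Theorem \ref{criteria}(2). Because $\eta_{t_0}=\hat{\bm{v}}$ and $\hat{\lambda}_{t_0}=1-t_0\hat{\kappa}_2$, the Leibniz rule together with $\hat{\lambda}_{t_0}(p)=0$ yields $\eta_{t_0}\lambda_{t_0}(p)=C(p)\,(\eta_{t_0}\hat{\lambda}_{t_0})(p)=-t_0 C(p)\,\hat{\bm{v}}\hat{\kappa}_2(p)$, so $\eta_{t_0}\lambda_{t_0}(p)=0$ precisely when $p$ is a ridge point. Assuming now this ridge condition, i.e. $\eta_{t_0}\hat{\lambda}_{t_0}(p)=0$ as well, the two cross terms in the second-order expansion $\eta_{t_0}\eta_{t_0}(\hat{\lambda}_{t_0}C)=(\eta_{t_0}\eta_{t_0}\hat{\lambda}_{t_0})C+2(\eta_{t_0}\hat{\lambda}_{t_0})(\eta_{t_0}C)+\hat{\lambda}_{t_0}(\eta_{t_0}\eta_{t_0}C)$ vanish at $p$, leaving $\eta_{t_0}\eta_{t_0}\lambda_{t_0}(p)=-t_0 C(p)\,\hat{\bm{v}}^{(2)}\hat{\kappa}_2(p)$. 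Hence $\eta_{t_0}\eta_{t_0}\lambda_{t_0}(p)\neq0$ if and only if $\hat{\bm{v}}^{(2)}\hat{\kappa}_2(p)\neq0$.

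Feeding these into Theorem \ref{criteria}(2) finishes both directions at once: $f_{t_0}$ at $p$ is a swallowtail if and only if $p$ is non-degenerate and $\eta_{t_0}\lambda_{t_0}(p)=0$, $\eta_{t_0}\eta_{t_0}\lambda_{t_0}(p)\neq0$, which translate into $d\hat{\kappa}_2(p)\neq0$, $\hat{\bm{v}}\hat{\kappa}_2(p)=0$, and $\hat{\bm{v}}^{(2)}\hat{\kappa}_2(p)\neq0$ — that is, $d\hat{\kappa}_2(p)\neq0$ together with $p$ being a first order ridge point. I expect the only genuine obstacle to be the verification that $C(p)\neq0$, which hinges on $v\hat{\kappa}_1$ being a nonvanishing smooth function along the $u$-axis (equivalently on $\hat{N}>0$ and Proposition \ref{converge}); once that is in hand, everything else is product-rule bookkeeping.
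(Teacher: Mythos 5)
Your proof is correct and follows essentially the same route as the paper's: identify the null vector field $\eta_{t_0}$ with the principal direction $\hat{\bm{v}}$, reduce the signed area density to the factor $\hat{\lambda}_{t_0}=1-t_0\hat{\kappa}_2$, compute $d\hat{\lambda}_{t_0}=-t_0d\hat{\kappa}_2$, $\eta_{t_0}\hat{\lambda}_{t_0}=-t_0\hat{\bm{v}}\hat{\kappa}_2$, $\eta_{t_0}^{(2)}\hat{\lambda}_{t_0}=-t_0\hat{\bm{v}}^{(2)}\hat{\kappa}_2$, and apply Theorem \ref{criteria}(2). The only difference is that you explicitly carry the nonvanishing factor $C=(v-t_0v\hat{\kappa}_1)\det(f_u,\psi,\nu)$ through the Leibniz-rule computations to justify replacing $\lambda_{t_0}$ by $\hat{\lambda}_{t_0}$ in the criteria, a bookkeeping step the paper leaves implicit.
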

\begin{proof}
Take an adapted coordinate system $(U;u,v)$ centered at $p\in S(f)$. 
Then we see that $d\hat{\lambda}_{t_0}=-t_0d\hat{\kappa}_2,$ and 
$\eta_{t_0}\hat{\lambda}_{t_0}=\hat{\bm{v}}(1-t_0\hat{\kappa}_2)=-t_0\hat{\bm{v}}\hat{\kappa}_2$, 
$\eta_{t_0}^{(2)}\hat{\lambda}_{t_0}=-t_0\hat{\bm{v}}^{(2)}\hat{\kappa}_2$. 
This completes the proof.
\end{proof}
Using Lemma \ref{ridge} and Theorem \ref{relation}, we have the following lemma.
\begin{lem}\label{swallow}
Let $f:V\to \bm{R}^3$ be a normal form of cuspidal edge in $(\ref{normal})$ and $f_{t_0}$ the parallel surface of $f$,
where $t_0=1/b_{20}$. 
Then $f_{t_0}$ is a swallowtail at the origin 
if and only if the coefficients of the normal form satisfy
\begin{align}
&b_{30}-a_{20}b_{12}\neq 0\ \textit{or}\ 4b_{12}^2+a_{20}b_{03}^2\neq 0,\\ 
&4b_{12}^3+b_{30}b_{03}^2=0,\\
&-2b_{20}^3b_{03}^4-3b_{20}(4b_{12}^2+a_{20}b_{03}^2)^2
+24(b_{03}^4h_2(0)
+4b_{12}^2b_{03}^2h_3(0)
-8b_{12}^3b_{03}h_4(0)+16b_{12}^4h_5(0,0))\neq 0.
\end{align}
\end{lem}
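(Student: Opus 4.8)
The plan is to obtain this lemma as a direct corollary of Theorem \ref{relation} and Lemma \ref{ridge}, since all of the substantive computation has already been carried out in those two results. Applying Theorem \ref{relation} at $p=\bm{0}$ with $t_0=1/\hat{\kappa}_2(\bm{0})=1/b_{20}$, the parallel surface $f_{t_0}$ has a swallowtail at the origin if and only if two conditions hold simultaneously: $d\hat{\kappa}_2(\bm{0})\neq 0$, and $\bm{0}$ is a first order ridge point of $f$ relative to $\hat{\bm{v}}$. So the task reduces to re-expressing each of these two conditions in terms of the coefficients of the normal form \eqref{normal}.

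For the ridge condition I would simply quote Lemma \ref{ridge}: the origin is a first order ridge point exactly when \eqref{ridge1} and \eqref{ridge2} hold, and these coincide verbatim with the second and third displayed conditions of the present statement. No new computation is needed for this half.

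For the non-degeneracy $d\hat{\kappa}_2(\bm{0})\neq 0$, I would read off the first partial derivatives of $\hat{\kappa}_2$ at the origin already recorded in the proof of Lemma \ref{ridge}, namely $(\hat{\kappa}_2)_u(\bm{0})=b_{30}-a_{20}b_{12}$ and $(\hat{\kappa}_2)_v(\bm{0})=-(4b_{12}^2+a_{20}b_{03}^2)/(2b_{03})$. Since $d\hat{\kappa}_2(\bm{0})\neq 0$ is equivalent to the non-vanishing of at least one of these, and since $b_{03}\neq 0$ in the normal form lets me clear the denominator in the second, this amounts exactly to requiring that $b_{30}-a_{20}b_{12}\neq 0$ or $4b_{12}^2+a_{20}b_{03}^2\neq 0$, which is the first displayed condition. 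Combining the three equivalences completes the argument.

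I expect no genuinely hard step here; the only point demanding care is the bookkeeping in translating $d\hat{\kappa}_2(\bm{0})\neq 0$ into the stated disjunction, and in particular verifying the exponent of $b_{12}$ in $(\hat{\kappa}_2)_v(\bm{0})$ so that the factor $4b_{12}^2+a_{20}b_{03}^2$ matches the square appearing in \eqref{ridge2}. I would cross-check this against the recorded value $\hat{\bm{v}}\hat{\kappa}_2(\bm{0})=(4b_{12}^3+b_{30}b_{03}^2)/(2b_{03})$ by expanding $\hat{\bm{v}}\hat{\kappa}_2=\xi(\hat{\kappa}_2)_u+\zeta(\hat{\kappa}_2)_v$ with $\xi(\bm{0})=b_{03}/2$ and $\zeta(\bm{0})=-b_{12}$, which pins down the correct form of $(\hat{\kappa}_2)_v(\bm{0})$ and thereby the first displayed condition.
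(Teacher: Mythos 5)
Your proposal is correct and is exactly the paper's route: the paper derives Lemma \ref{swallow} in one line by combining Theorem \ref{relation} (swallowtail $\Leftrightarrow$ $d\hat{\kappa}_2(\bm{0})\neq 0$ and first order ridge) with Lemma \ref{ridge}, just as you do. Your cross-check of $(\hat{\kappa}_2)_v(\bm{0})$ against $\hat{\bm{v}}\hat{\kappa}_2(\bm{0})=(4b_{12}^3+b_{30}b_{03}^2)/(2b_{03})$ is well placed, since the exponent-$2$ form $-(4b_{12}^2+a_{20}b_{03}^2)/(2b_{03})$ is the one consistent with the stated conditions (the paper's proof of Lemma \ref{ridge} writes $b_{12}^3$ there, evidently a typo).
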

\begin{exa}
Let $f$ be a cuspidal edge given as 
$f(u,v)=(u,u^2/2+u^3/3+v^2/2,u^2+v^3/3)$. 
The coefficients of $f$ satisfy the conditions of Lemmas 2.2 and 3.1. 
The unit normal vector of $f$ is 
$
\nu=(-2u+uv+u^2v,-v,1)/\delta,
$ 
where $\delta=\sqrt{1+v^2+(-2u+uv+u^2v)^2}$. Since $b_{20}=2$, we take the parallel surface $f_{t_0}$ as 
$f_{t_0}=f+\nu/2$. We can see a swallowtail singularity on $f_{t_0}$ (see Figure \ref{fig:one}).
\begin{figure}[h]
 \begin{center}
  \includegraphics[width=7.0cm,angle=0,clip]{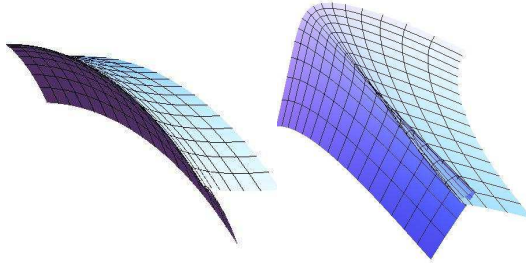}
 \caption{The left side is $f$ and the right is $f_{t_0}$ in Example 3.1.}
  \label{fig:one}
 \end{center}
\end{figure}
\end{exa}

\subsection{Extended distance squared functions on cuspidal edges}
Let $h:(\bm{R}^n,\mathbf{0})\to(\bm{R},0)$ be a function-germ. A function-germ 
$H:(\bm{R}^n\times\bm{R}^r,\mathbf{0})\to(\bm{R},0)$ is called an \textit{unfolding} of $f$ if 
$H(\bm{u},\bm{0})=h(\bm{u})$ holds. 
We define the \textit{discriminant set $\mathcal{D}_H$} of $H$ by
\begin{equation*}
\mathcal{D}_H=\left\{\bm{x}\in(\bm{R}^r,\bm{0})\ |\ 
H(\bm{u},\bm{x})=H_{u_1}(\bm{u},\bm{x})=\cdots=H_{u_n}(\bm{u},\bm{x})=0\right\},
\end{equation*} 
where 
$(\bm{u},\bm{x})=(u_1,\ldots,u_n,x_1,\ldots,x_r)\in(\bm{R}^n\times\bm{R}^r,\mathbf{0})$ 
and $H_{u_i}=\partial H/\partial u_{i}\ (1\leq i\leq n)$. 
In the case $n=1,\ r=3$, if $h'(0)=h''(0)=h'''(0)=0,\ h^{(4)}(0)\neq0$ and $H$ is $\mathcal{K}$-versal, 
then $\mathcal{D}_H$ is locally diffeomorphic to the image of swallowtail (\cite[Section 6]{BG}).
See \cite[Section 8]{AGV}, for the $\mathcal{K}$-versality. See also \cite{IT1,IT2}. 

Let $f:V\to\bm{R}^3$ be a front and $f$ at $p\in V$ a cuspidal edge. 
Take an adapted coordinate system $(U;u,v)$ centered at $p$. 
We define the following function:
\begin{equation}
\Phi:\left(U\times\bm{R}^3,(p,q)\right)\to\bm{R},\ 
\Phi(u,v,(x,y,z))=-\frac{1}{2}\left(||(x,y,z)-f(u,v)||^2-t_0^2\right),\label{Phi}
\end{equation}
where $
q=(x_0,y_0,z_0)=f(p)+t_0\nu(p)\in\bm{R}^3\setminus\{\bm{0}\},\ t_0=1/\hat{\kappa}_2(p)\in\bm{R}\setminus\{0\}
$ are constants. 
We call this function in \eqref{Phi} the 
\textit{extended distance squared function}. 
If $f$ is regular, it is known that $\Phi$ is  $\mathcal{K}$-versal and 
$\mathcal{D}_{\Phi}$ is equal to the image of parallel surfaces of $f$. 
Thus singularities of $\mathcal{D}_{\Phi}$ correspond to singularities of the parallel surface. 
Let us assume that $f$ at $p$ is cuspidal edge and $(U;u,v)$ an adapted coordinate system centered at $p$. 
Since 
$\Phi_u=-\langle (x,y,z)-f,f_u\rangle,\ 
\Phi_v=-v\langle (x,y,z)-f,\psi\rangle,$ 
we have
\begin{equation}\label{discrim1}
\mathcal{D}_{\Phi}=\{(x,y,z)\in\bm{R}^3\ |\ (x,y,z)=f(u,v)\pm t_0\nu(u,v),\ \text{for some}\ (u,v)\in U\}
\cup\{f_u\}^{\perp}.
\end{equation}
Setting $$\phi(u,v)=\Phi(u,v,q),$$ then 
$\Phi$ is an unfolding of $\phi$, but never a $\mathcal{K}$-versal unfolding by \eqref{discrim1}. 

Let $p=\bm{0}$ and $f$ be a normal form in \eqref{normal}. Then we have 
\begin{equation*}
\varphi(u,v)=\frac{1}{6b_{20}}(b_{30}u^3+3b_{12}uv^2+b_{03}v^3)+O(4),
\end{equation*}
where $O(4)=
\left\{h(u,v):(\bm{R}^2,\bm{0})\to(\bm{R},0)\ |\ (\partial^{i+j}/\partial u^i\partial v^j)h(\bm{0})=0,\ i+j\leq3\right\}$.
By a direct calcuation,  
\begin{align}\label{delta}
\Delta_{\varphi}&=\left((\varphi_{uuu})^2(\varphi_{vvv})^2-6\varphi_{uuu}\varphi_{uuv}\varphi_{vvv}
-3(\varphi_{uuv})^2(\varphi_{uvv})^2
+4(\varphi_{uuv})^3\varphi_{vvv}+4\varphi_{uuu}(\varphi_{uvv})^3\right)(\bm{0})\\
&=\frac{b_{30}}{b_{20}}(b_{30}b_{03}^2+4b_{12}^3).\notag
\end{align}
holds. 
By \cite[Lemma 3.1.]{S}, $\varphi$ at $\bm{0}$ is right equivalent to 
$(u^3+uv^2)$ $(\text{resp.}\ (u^3-uv^2))$ if and only if 
$\Delta_{\varphi}>0$ $(\text{resp.}\ \Delta_{\varphi}<0)$, 
where function-germs $h,k:(\bm{R}^2,\bm{0})\to(\bm{R},0)$ are \textit{right equivalent} 
if there exists a diffeomorphism-germ $\theta:(\bm{R}^2,\bm{0})\to(\bm{R}^2,\bm{0})$ such that $k=h\circ\theta$ holds. 
We have the following property.
\begin{thm}\label{umbilic}
Under the above settings, the function $\varphi$ has a $D_4$ singularity at $\bm{0}$ if and only if
the conditions that $b_{30}\neq0$ and $\bm{0}$ is not ridge hold.
\end{thm}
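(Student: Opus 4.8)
The plan is to reduce the statement to the nonvanishing of the discriminant-type quantity $\Delta_\varphi$ already computed in \eqref{delta}, combined with the ridge condition \eqref{ridge1}. First I would read off from the displayed expansion of $\varphi$ that its $2$-jet at $\bm 0$ vanishes identically: the constant, linear and quadratic terms are all absent, so $\bm 0$ is a critical point of corank $2$ (the Hessian is zero there). By the Arnold classification of function-germs, at such a corank-$2$ critical point $\varphi$ has a $D_4$ singularity precisely when its $3$-jet is a nondegenerate binary cubic form, i.e. when the cubic $b_{30}u^3+3b_{12}uv^2+b_{03}v^3$ has three distinct roots; if the cubic acquires a repeated linear factor, the singularity is of type $D_k$ with $k\ge 5$ or worse.

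Next I would invoke the quoted result \cite[Lemma 3.1]{S}: the germ $\varphi$ is right equivalent to $u^3+uv^2$ (respectively $u^3-uv^2$) exactly when $\Delta_\varphi>0$ (respectively $\Delta_\varphi<0$). Both normal forms $u^3\pm uv^2$ represent $D_4$ (the hyperbolic umbilic $D_4^+$ and the elliptic umbilic $D_4^-$), so ``$\varphi$ has a $D_4$ singularity at $\bm 0$'' is equivalent to ``$\Delta_\varphi\neq 0$''. By \eqref{delta} we have $\Delta_\varphi=\tfrac{b_{30}}{b_{20}}(b_{30}b_{03}^2+4b_{12}^3)$, and since $t_0=1/b_{20}$ is a finite nonzero constant we have $b_{20}>0$ (recall $b_{20}\ge 0$ in the normal form, which also legitimizes the factor $1/(6b_{20})$ appearing in $\varphi$). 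Hence $\Delta_\varphi$ has the same sign as $b_{30}(b_{30}b_{03}^2+4b_{12}^3)$, and $\Delta_\varphi\neq 0$ holds if and only if both $b_{30}\neq 0$ and $b_{30}b_{03}^2+4b_{12}^3\neq 0$.

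Finally I would translate the second inequality into the ridge condition: by \eqref{ridge1} the point $\bm 0$ is a ridge exactly when $4b_{12}^3+b_{30}b_{03}^2=0$, so ``$\bm 0$ is not ridge'' is equivalent to $b_{30}b_{03}^2+4b_{12}^3\neq 0$. Combining this with $b_{30}\neq 0$ gives $\Delta_\varphi\neq 0$, which by the previous paragraph is equivalent to $\varphi$ having a $D_4$ singularity at $\bm 0$, completing the equivalence. I do not expect a genuine obstacle once this reduction is set up; the only point requiring care is the singularity-theoretic input, namely that at a corank-$2$ critical point a $D_4$ singularity is characterized precisely by nondegeneracy of the cubic $3$-jet and that both $D_4^+$ and $D_4^-$ must be collected under ``$D_4$''. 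This is exactly what \cite[Lemma 3.1]{S} supplies through the sign of $\Delta_\varphi$, leaving only the bookkeeping above, carried out under the standing hypothesis $b_{20}>0$.
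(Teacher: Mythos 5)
Your proposal is correct and follows essentially the same route as the paper: the paper's (implicit) proof consists precisely of expanding $\varphi$ in the normal form, computing $\Delta_{\varphi}=\tfrac{b_{30}}{b_{20}}(b_{30}b_{03}^2+4b_{12}^3)$, invoking \cite[Lemma 3.1]{S} to identify $D_4$ with $\Delta_{\varphi}\neq0$, and matching the factor $b_{30}b_{03}^2+4b_{12}^3$ with the ridge condition \eqref{ridge1}. Your additional remarks (vanishing of the $2$-jet, $b_{20}>0$ from $t_0=1/\hat{\kappa}_2(\bm{0})$ being finite and nonzero, and that both $D_4^{\pm}$ normal forms count as $D_4$) are exactly the bookkeeping the paper leaves tacit.
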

The term $b_{30}$ is called the \textit{edge inflectional curvature}. See \cite{MS} for details.
By Theorem \ref{umbilic}, we might say that $\bm{0}$ satisfying the above conditions 
is an \textit{umbilic point} of a cuspidal edge.

By \eqref{Phi}, contact between $f$ and the sphere whose center is 
$f(p)+t_0\nu(p)$ with radius $t_0=1/\hat{\kappa}_2(p)$ 
become $D_4$ type if the conditions in Theorem \ref{umbilic} hold.
\section{Dual surfaces and extended height functions}
We consider the \textit{dual surface} of a cuspidal edge and the \textit{extended height function} on a cuspidal edge.
\subsection{Dual surfaces and height functions}
Let $f:U\to\bm{R}^3$ be a front, $\nu$ a unit normal vector field of $f$ and $f$ at $p\in U$ a cuspidal edge.
Take a constant vector $\bm{c}=(c_1,c_2,c_3)\in\bm{R}^3\setminus\{\bm{0}\}$ 
which satisfies $\langle \nu(p),\bm{c}\rangle\neq0$. 
We set the map $\bar{f}:U\to\bm{R}^3$ as $\bar{f}=f+\bm{c}$, and set the function 
$\rho(u,v)=\langle \bar{f}(u,v),\nu(u,v)\rangle\ ((u,v)\in U)$. 
We define the \textit{dual surface} $f^*:U\to\bm{R}^3$ as
\begin{equation}
f^*(u,v)=\rho(u,v)\nu(u,v). \label{dual}
\end{equation}
We define the following function:
\begin{equation}
\tilde{H}:\left(U\times(S^2\times\bm{R}),(p,(\bm{n}_0,r_0))\right)\to\bm{R},\ 
\tilde{H}(u,v,(\bm{n},r))=\langle\bar{f}(u,v),\bm{n}\rangle-r\ (\bm{n}_0\in S^2, r_0\in\bm{R}).
\end{equation}
We call this function $\tilde{H}$ the \textit{extended height function}. 
It is well-known that if $f$ is regular, then $\tilde{H}$ is $\mathcal{K}$-versal and $\mathcal{D}_{\tilde{H}}$
is diffeomorphic to the image of dual surfaces (cf. \cite{IT2}). 
Hence singularities of $\mathcal{D}_{\tilde{H}}$ and the dual surface are supported. 
We set the function $\tilde{h}(u,v)=\tilde{H}(u,v,(\bm{n}_0,r_0))$. 
By definition, $\tilde{H}$ is an unfolding of $\tilde{h}$, but not $\mathcal{K}$-versal. 
Since $\tilde{H}=\langle\bar{f},\bm{n}\rangle-r,\ 
\tilde{H}_u=\langle f_u,\bm{n}\rangle$ and 
$\tilde{H}_v=v\langle\psi,\bm{n}\rangle$, we see that 
\begin{equation}
\mathcal{D}_{\tilde{H}}=
\{(\pm\nu(u,v),\pm\langle\bar{f}(u,v),\nu(u,v)\rangle)\ |\ (u,v)\in U\}\cup\{f_u\}^{\perp} \label{discrim2}.
\end{equation}
We define the map 
$
\Psi:S^2\times(\bm{R}\setminus\{0\})\to\bm{R}^3\setminus\{\mathbf{0}\},$ 
by $
\Psi(\bm{n},r)=r\bm{n}.
$ 
Setting $\bm{R}_+=\{r\in\bm{R}\  |\ r>0\}$, 
$\Psi|{S^2\times\bm{R}_+}$ is a diffeomorphism, and we have 
$
\Psi(\mathcal{D}_{\tilde{H}})\supset f^*(U).
$ 
Thus we can regard the image of $f^*$ as a part of $\mathcal{D}_{\tilde{H}}$ .

\subsection{Singularities of dual surfaces}
Let us consider singularities of dual surface.
Setting a unit normal $\nu^*$ of $f^*$ as $$\nu^*=(f^*)_u\times(f^*)_v/||(f^*)_u\times(f^*)_v||,$$ then 
$\nu^*$ is smooth on $U$. 
If $f$ is a normal form (\ref{normal}), then the signed area density $\lambda^*=\det((f^*)_u,(f^*)_v,\nu^*)$ of $f^*$ 
satisfies $\lambda^*(\mathbf{0})=b_{20}b_{03}c_3||\bm{c}||/2$. 
Thus we have the following. 
\begin{prop}
A point $p\in U$ is a singular point of $f^*$ if and only if $\hat{\kappa}_2(p)=0$. 
\end{prop}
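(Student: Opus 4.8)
The plan is to reduce the statement to the vanishing of $(f^*)_u\times(f^*)_v$ and to show that this cross product carries $\hat{L}\hat{N}-v\hat{M}^2$, equivalently $\hat{\kappa}_2$, as an overall factor. Since $\nu^*$ is a smooth unit normal of $f^*$, the point $p$ is singular for $f^*$ exactly when $(f^*)_u,(f^*)_v$ are linearly dependent, i.e.\ $(f^*)_u\times(f^*)_v=\bm{0}$ (equivalently $\lambda^*=\|(f^*)_u\times(f^*)_v\|=0$). Differentiating $f^*=\rho\nu$ gives $(f^*)_u=\rho_u\nu+\rho\nu_u$ and $(f^*)_v=\rho_v\nu+\rho\nu_v$, and since $\nu\times\nu=\bm{0}$ one obtains
\[
(f^*)_u\times(f^*)_v=\rho\bigl[\rho_u(\nu\times\nu_v)-\rho_v(\nu\times\nu_u)+\rho(\nu_u\times\nu_v)\bigr].
\]
Because $\rho(p)=\langle\bar{f}(p),\nu(p)\rangle=\langle\bm{c},\nu(p)\rangle\neq0$ (as $f(p)=\bm{0}$), after shrinking $U$ we may assume $\rho\neq0$ on $U$, so it suffices to analyse the bracket.

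Next I would feed the Weingarten formulas of Lemma \ref{wein1} into the three cross products. Writing $W=\sqrt{\hat{E}\hat{G}-\hat{F}^2}$ and using $f_u\times\psi=W\nu$ together with the triple-product identity, the coefficients collapse to the clean expressions
\[
\nu\times\nu_u=\tfrac{1}{W}(\hat{M}f_u-\hat{L}\psi),\qquad
\nu\times\nu_v=\tfrac{1}{W}(\hat{N}f_u-v\hat{M}\psi),\qquad
\nu_u\times\nu_v=\tfrac{\hat{L}\hat{N}-v\hat{M}^2}{W}\,\nu.
\]
Thus the normal part of the bracket is $\rho\,\dfrac{\hat{L}\hat{N}-v\hat{M}^2}{W}\,\nu$, which already exhibits the factor $\hat{L}\hat{N}-v\hat{M}^2$.

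The crux is to verify that the tangential part $\rho_u(\nu\times\nu_v)-\rho_v(\nu\times\nu_u)$ carries the \emph{same} factor. For this I would use $\langle f_u,\nu\rangle=\langle f_v,\nu\rangle=0$ to rewrite $\rho_u=\langle\bar{f},\nu_u\rangle$, $\rho_v=\langle\bar{f},\nu_v\rangle$, expand $\nu_u,\nu_v$ in the frame $\{f_u,\psi\}$ via Lemma \ref{wein1}, and substitute into the $f_u$- and $\psi$-coefficients $\rho_u\hat{N}-\rho_v\hat{M}$ and $-v\rho_u\hat{M}+\rho_v\hat{L}$. Every mixed term cancels and both coefficients come out equal to $\tfrac{\hat{L}\hat{N}-v\hat{M}^2}{W^2}$ times a combination of $\langle\bar{f},f_u\rangle$ and $\langle\bar{f},\psi\rangle$. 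Consequently the whole bracket equals $\tfrac{\hat{L}\hat{N}-v\hat{M}^2}{W}$ times a vector whose $\nu$-component is $\rho$; since that vector is never zero, $(f^*)_u\times(f^*)_v=\bm{0}$ holds if and only if $\hat{L}\hat{N}-v\hat{M}^2=0$. This common-factor cancellation is the only delicate point; the individual coefficients are bulky, but each simplification is forced by the Weingarten relations.

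Finally I would convert $\hat{L}\hat{N}-v\hat{M}^2=0$ into the stated curvature condition. By \eqref{k_h_first} and $K=\hat{\kappa}_1\hat{\kappa}_2$ one has $\hat{L}\hat{N}-v\hat{M}^2=(v\hat{\kappa}_1)\hat{\kappa}_2\,W^2$, and from \eqref{cecurv} the function $v\hat{\kappa}_1=(\hat{A}+\hat{B})/(2W^2)$ equals $\hat{E}\hat{N}/W^2>0$ along the $u$-axis (using $\hat{N}>0$ there), hence is a nonvanishing smooth function on $U$ after shrinking. Therefore $\hat{L}\hat{N}-v\hat{M}^2=0$ if and only if $\hat{\kappa}_2=0$, which proves the proposition. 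As a consistency check, evaluating at $\bm{0}$ for the normal form recovers $\lambda^*(\bm{0})=b_{20}b_{03}c_3\|\bm{c}\|/2$ with $b_{20}=\hat{\kappa}_2(\bm{0})$.
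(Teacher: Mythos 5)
Your proof is correct, but it follows a genuinely different route from the paper's. The paper disposes of this proposition with a single normal-form evaluation: taking $f$ in the form \eqref{normal} (so $p=\bm{0}$, $f(\bm{0})=\bm{0}$, $\nu(\bm{0})=(0,0,1)$), it computes the signed area density of $f^*$ at the origin, $\lambda^*(\bm{0})=b_{20}b_{03}c_3||\bm{c}||/2$, and concludes from $b_{03}\neq0$, $c_3=\langle\bm{c},\nu(\bm{0})\rangle\neq0$ and $\bm{c}\neq\bm{0}$ that $\lambda^*(\bm{0})=0$ exactly when $b_{20}=\hat{\kappa}_2(\bm{0})=0$. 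You instead work in an arbitrary adapted coordinate system and factor the full cross product: writing $W=\sqrt{\hat{E}\hat{G}-\hat{F}^2}$, $P=\langle\bar{f},f_u\rangle$, $Q=\langle\bar{f},\psi\rangle$, your identities give
$$(f^*)_u\times(f^*)_v=\rho\,\frac{\hat{L}\hat{N}-v\hat{M}^2}{W}\left(\frac{(\hat{F}Q-\hat{G}P)f_u+(\hat{F}P-\hat{E}Q)\psi}{W^2}+\rho\nu\right),$$
whose last factor has $\nu$-component $\rho\neq0$, and then you convert via $\hat{L}\hat{N}-v\hat{M}^2=(v\hat{\kappa}_1)\hat{\kappa}_2W^2$ and the nonvanishing of $v\hat{\kappa}_1$ near the $u$-axis. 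I verified the three cross-product identities, the cancellation in the tangential part, and the final conversion; all are correct, and your formula specializes at $\bm{0}$ to the paper's value of $\lambda^*(\bm{0})$ up to sign, as you note. As for what each approach buys: the paper's proof is very short but is literally a computation at the origin of the normal form, so applying the proposition at every $p\in U$ tacitly requires re-normalizing at each singular point and the invariance of both sides of the equivalence; your factorization proves the statement simultaneously at all points of a neighborhood of $p$ (regular points of $f$ included) without the normal-form machinery, and it surfaces a hypothesis the paper leaves implicit, namely $\rho\neq0$ on $U$ --- if $\rho$ vanished at a point, that point would be singular for $f^*$ regardless of $\hat{\kappa}_2$, so your normalization $f(p)=\bm{0}$ (whence $\rho(p)=\langle\bm{c},\nu(p)\rangle\neq0$) and the shrinking of $U$ are genuinely needed. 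The cost is a longer computation, together with the standing assumption $\hat{N}>0$ on the $u$-axis (so that $\hat{\kappa}_2$ is the smooth principal curvature), which is also the paper's convention.
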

We assume $b_{20}(=\hat{\kappa}_2(\mathbf{0}))=0$ in the following. 
We detect the null vector field $\eta^*$ of $f^*$. 
Let $\eta^*=\ell_1^*\partial_u+\ell_2^*\partial_v$ be a vector field, 
where $\ell_i^*\ (i=1,2)$ are functions on $U$. 
By Lemma 2.1,
\begin{equation}
df^*(\eta)=(\ell_1^*\rho_u+\ell_2^*\rho_v)\nu+\rho(\ell_1^*\alpha+\ell_2^*\tilde{\alpha})f_u+
\rho(\ell_1^*\beta+\ell_2^*\tilde{\beta})\psi\label{null2} ,
\end{equation}
where $\nu_u=\alpha f_u+\beta\psi,\nu_v=\tilde{\alpha}f_u+\tilde{\beta}\psi$.  
If $f$ is a normal form (\ref{normal}) and $b_{20}=0$, then $\rank df_{\bm{0}}^*=1$. Moreover, 
\begin{equation}
\rho_u(\mathbf{0})=-b_{12}c_2,\ \rho_v(\mathbf{0})=-b_{03}c_2/2,\ 
\alpha(\mathbf{0})=\tilde{\alpha}(\mathbf{0})=0,\ 
\beta(\mathbf{0})=-b_{12},\ \tilde{\beta}(\mathbf{0})=-b_{03}/2(\neq0)
\label{null4}
\end{equation}
holds. 
Setting 
$
\eta^*=\tilde{\beta}(u,v)\partial_u-\beta(u,v)\partial_v, 
$
then by (\ref{null4}), $\eta^*$ is a null vector field of $f^*$. 
Since, $\eta^*\nu^*\neq0$ holds, $f^*$ is a front. 
By Theorem \ref{criteria} (1), we consider the conditions that make dual surface $f^*$ a cuspidal edge. 
We have $d\lambda^*(\eta^*)=\tilde{\beta}(\lambda^*)_u-\beta(\lambda^*)_v$. 
If $f$ is a normal form (\ref{normal}), we obtain
\begin{equation}
(\lambda^*)_u(\mathbf{0})=\frac{1}{2}(b_{30}-a_{20}b_{12})b_{03}c_3||\bm{c}||,\ 
(\lambda^*)_v(\mathbf{0})=-\frac{1}{4}(4b_{12}^2+a_{20}b_{03}^2)c_3||\bm{c}||.
\end{equation}
Thus we have 
$
4d\lambda_{\bm{0}}^*(\eta^*)
 = -(4b_{12}^3+b_{30}b_{03}^2)c_3||\bm{c}||.
$ 
By Lemma 2.2, we have the follwing.
\begin{prop}\label{singdual}
Let $f:U\to\bm{R}^3$ be a front, $f$ at $p$ be a cuspidal edge and $f^*$ a dual surface of $f$. 
Then $f^*$ has a cuspidal edge at $p\in S(f)$ if and only if $\hat{\kappa}_2(p)=0$ 
and $p$ is not a ridge point. 
\end{prop}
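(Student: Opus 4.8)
The plan is to apply the cuspidal-edge criterion of Theorem \ref{criteria}(1) to the dual map $f^*$, using the null vector field $\eta^*$ already identified above, and then to translate the resulting nondegeneracy condition into the coefficients of the normal form. Since the singularity type of $f^*$ at $p$, the vanishing of $\hat{\kappa}_2(p)$, and the ridge condition at $p$ are all invariant under coordinate changes on the source and isometries on the target, I would first reduce to the case $p=\bm{0}$ with $f$ given by the normal form \eqref{normal}.

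The argument then assembles in three checks. First, by the Proposition immediately preceding this one, $p$ is a singular point of $f^*$ exactly when $\hat{\kappa}_2(p)=0$, which in the normal form is the condition $b_{20}=0$ that I assume throughout. Second, the front condition is already available: the vector field $\eta^*=\tilde{\beta}\,\partial_u-\beta\,\partial_v$ was shown to be a null vector field of $f^*$ with $\eta^*\nu^*\neq\bm{0}$, so $f^*$ is a front near $p$. Third, Theorem \ref{criteria}(1) then says that $f^*$ has a cuspidal edge at $p$ if and only if $p$ is a nondegenerate singular point and $\eta^*\lambda^*(p)\neq0$.

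It remains to evaluate $\eta^*\lambda^*=d\lambda^*(\eta^*)$ at $\bm{0}$, which is precisely the quantity computed in the discussion above,
\begin{equation*}
4\,d\lambda_{\bm{0}}^{*}(\eta^*)=-(4b_{12}^3+b_{30}b_{03}^2)\,c_3\,||\bm{c}||.
\end{equation*}
Here $\bm{c}\neq\bm{0}$ forces $||\bm{c}||\neq0$, and the standing hypothesis $\langle\nu(p),\bm{c}\rangle\neq0$ becomes $c_3\neq0$ once one records that $\nu(\bm{0})=(0,0,1)$ for the normal form; hence the factor $c_3||\bm{c}||$ is nonzero. Therefore $\eta^*\lambda^*(\bm{0})\neq0$ if and only if $4b_{12}^3+b_{30}b_{03}^2\neq0$. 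By \eqref{ridge1} in Lemma \ref{ridge}, the vanishing $4b_{12}^3+b_{30}b_{03}^2=0$ is exactly the condition that $\bm{0}$ is a ridge point, so $\eta^*\lambda^*(\bm{0})\neq0$ is equivalent to $p$ not being a ridge point. Finally, since $\tilde{\beta}(\bm{0})=-b_{03}/2\neq0$ makes $\eta^*(\bm{0})$ a nonzero tangent vector, the inequality $\eta^*\lambda^*(\bm{0})\neq0$ already forces $d\lambda^*(\bm{0})\neq0$, so the nondegeneracy hypothesis in Theorem \ref{criteria} holds automatically and need not be verified separately.

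I expect the only genuinely delicate point to be the invariance used in the reduction to the normal form: one must confirm that the property \emph{$f^*$ has a cuspidal edge at $p$} and the ridge condition are unchanged by the source diffeomorphism and target isometry that bring $f$ to \eqref{normal}, and that the constant vector $\bm{c}$ with $\langle\nu(p),\bm{c}\rangle\neq0$ transforms compatibly. Granting this, the proof is a direct combination of the preceding Proposition, Theorem \ref{criteria}(1), and Lemma \ref{ridge}, with the single displayed computation of $d\lambda^*(\eta^*)$ already carried out above.
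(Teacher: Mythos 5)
Your proposal is correct and takes essentially the same route as the paper: the paper's own ``proof'' is precisely the chain of facts assembled in the discussion preceding the statement (the null vector field $\eta^*=\tilde{\beta}\partial_u-\beta\partial_v$, the front property of $f^*$, and the formula $4d\lambda^*_{\bm{0}}(\eta^*)=-(4b_{12}^3+b_{30}b_{03}^2)c_3||\bm{c}||$), combined with Theorem \ref{criteria} (1), Proposition 4.1 and equation \eqref{ridge1} of Lemma \ref{ridge}. Your two extra observations --- that $\eta^*\lambda^*(\bm{0})\neq0$ automatically forces the non-degeneracy $d\lambda^*(\bm{0})\neq0$, and that the reduction to the normal form requires invariance of the cuspidal edge, ridge, and $\hat{\kappa}_2$ conditions --- are sound refinements of points the paper leaves implicit.
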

\begin{exa}
Let $f$ be a cuspidal edge defined by 
$f(u,v)=(u, 2u^3/3 + v^2/2, 1+ u^3/3 + uv^2 + v^3/6)$.   
The coefficients of $f$ satisfy the conditions in Proposition 4.2. 
Thus $f^*$ has a cuspidal edge at $\bm{0}$.
The unit normal vector of $f$ is 
$\nu=(-u^2+4u^3+u^2v-v^2,-2u-v/2,1)/\delta,$ 
where $\delta=\sqrt{1 + (2 u + v/2)^2 + (u ^2- 4u^3 - u^2v+ v^2)^2}$. 
The shapes of $f$ and $f^*$ are shown in Figure \ref{fig:three}. 
\begin{figure}[h]
 \begin{center}
  \includegraphics[width=6.0cm,angle=0,clip]{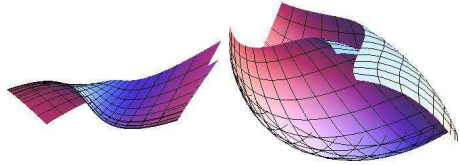}
 \caption{The left side is $f$ and the right is $f^*$ in Example 4.1.}
  \label{fig:three}
 \end{center}
\end{figure}
\end{exa}
\subsection{Singularities of extended height functions}
We consider singularities of extended height function. 
Let $f$ be a normal form in \eqref{normal}. 
We assume $b_{20}=\hat{\kappa}_2(\bm{0})=0$. 
Let $\bm{n}_0=\nu(\bm{0})=(0,0,1)$ and $r_0=\langle \bar{f}(\bm{0}),\bm{n}_0(\mathbf{0})\rangle=c_3$. 
Then a function $\tilde{h}$ becomes $\tilde{h}(u,v)=f_3(u,v)$, where $f=(f_1,f_2,f_3)$. 
Here, $\tilde{h}$ is 
\begin{equation}
\tilde{h}(u,v)=\frac{b_{30}}{6}u^3+\frac{b_{12}}{2}uv^2+\frac{b_{03}}{6}v^3+
u^4h_2(u)+u^2v^2h_3(u)+uv^3h_4(u)+v^4h_5(u,v).\label{tilh2} 
\end{equation}
Now $j^2\tilde{h}=0$ holds. 
By direct computation, 
\begin{align*}
\Delta_{\tilde{h}}&=\left((\tilde{h}_{uuu})^2(\tilde{h}_{vvv})^2-6\tilde{h}_{uuu}\tilde{h}_{uuv}\tilde{h}_{vvv}
-3(\tilde{h}_{uuv})^2(\tilde{h}_{uvv})^2
+4(\tilde{h}_{uuv})^3\tilde{h}_{vvv}+4\tilde{h}_{uuu}(\tilde{h}_{uvv})^3\right)(\bm{0})\notag\\
&=b_{30}(b_{30}b_{03}^2+4b_{12}^3).
\end{align*}
By \cite[Lemma 3.1]{S}, we have the following.
\begin{lem}\label{singheight}
Under the above conditions, the function $\tilde{h}$ has a $D_4$ singularity at $p\in S(f)$ if and only if 
$\hat{\kappa}_2(p)=0$, the edge inflectional curvature does not vanish at $p$ and $p$ is not a ridge point.
\end{lem}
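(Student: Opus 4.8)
The plan is to reduce the statement to the classical $D_4$-detection criterion for a plane function-germ with vanishing $2$-jet, and then to translate the resulting algebraic conditions into the geometric ones. Working in the normal form \eqref{normal} with $\bm{n}_0=\nu(\bm{0})=(0,0,1)$ and $r_0=c_3$, the height function $\tilde{h}$ coincides with the third component $f_3$, whose $2$-jet is $\tfrac{b_{20}}{2}u^2$. Since a $D_4$ singularity has corank two, the vanishing of this $2$-jet is necessary, so the condition $\hat{\kappa}_2(p)=b_{20}=0$ is exactly the requirement $j^2\tilde{h}=0$. Under this assumption $\tilde{h}$ has the expansion \eqref{tilh2}, and its singularity type is governed entirely by the cubic part $j^3\tilde{h}=\tfrac{b_{30}}{6}u^3+\tfrac{b_{12}}{2}uv^2+\tfrac{b_{03}}{6}v^3$.

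First I would invoke \cite[Lemma 3.1]{S}: for a germ with vanishing $2$-jet, $\tilde{h}$ is right equivalent to $u^3+uv^2$ (resp. $u^3-uv^2$), i.e. has a $D_4$ singularity, precisely when $\Delta_{\tilde{h}}>0$ (resp. $<0$). Hence, granted $\hat{\kappa}_2(p)=0$, the germ $\tilde{h}$ has a $D_4$ singularity at $p$ if and only if $\Delta_{\tilde{h}}\neq0$; the vanishing of $\Delta_{\tilde{h}}$ is the degeneracy of the binary cubic $j^3\tilde{h}$ that would force a singularity of type $D_5$ or worse.

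Next I would use the value already computed above,
\[
\Delta_{\tilde{h}}=b_{30}\,(b_{30}b_{03}^2+4b_{12}^3),
\]
so that $\Delta_{\tilde{h}}\neq0$ if and only if both $b_{30}\neq0$ and $4b_{12}^3+b_{30}b_{03}^2\neq0$. It then remains to read off the geometric meaning of these two factors. The first factor $b_{30}$ is the edge inflectional curvature, so $b_{30}\neq0$ is exactly the statement that the edge inflectional curvature does not vanish at $p$. For the second factor, the computation in the proof of Lemma \ref{ridge} gives $\hat{\bm{v}}\hat{\kappa}_2(\bm{0})=(4b_{12}^3+b_{30}b_{03}^2)/(2b_{03})$, so that $\bm{0}$ is a ridge point if and only if $4b_{12}^3+b_{30}b_{03}^2=0$; thus nonvanishing of the second factor says precisely that $p$ is not a ridge point. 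Assembling the three equivalences yields the claim.

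Since $\Delta_{\tilde{h}}$ is already in hand, there is no real computational obstacle. The one point requiring care is the correct use of \cite[Lemma 3.1]{S}: one must check that $\Delta_{\tilde{h}}\neq0$ is not only necessary but sufficient for the $D_4$ type, equivalently that nondegeneracy of the cubic $j^3\tilde{h}$ characterizes $D_4$ among all germs of corank two, and that the ridge condition in the Definition is faithfully detected by \eqref{ridge1} through the formula for $\hat{\bm{v}}\hat{\kappa}_2(\bm{0})$.
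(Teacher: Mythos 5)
Your proof is correct and follows essentially the same route as the paper: with $\bm{n}_0=\nu(\bm{0})$ and $r_0=c_3$ one has $\tilde{h}=f_3$, the condition $\hat{\kappa}_2(p)=b_{20}=0$ is exactly $j^2\tilde{h}=0$, and then \cite[Lemma 3.1]{S} applied to $\Delta_{\tilde{h}}=b_{30}(b_{30}b_{03}^2+4b_{12}^3)$ identifies the $D_4$ condition with $b_{30}\neq0$ (nonvanishing edge inflectional curvature) and $4b_{12}^3+b_{30}b_{03}^2\neq0$ (non-ridge, via the formula for $\hat{\bm{v}}\hat{\kappa}_2(\bm{0})$ from Lemma \ref{ridge}). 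Your only addition is the explicit corank-two argument making the necessity of $\hat{\kappa}_2(p)=0$ part of the equivalence, which the paper instead folds into its standing assumption $b_{20}=0$ before stating the lemma.
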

By Proposition \ref{singdual} and Lemma \ref{singheight}, we have the following.
\begin{prop}\label{relatedual}
Let $f:U\to\bm{R}^3$ be a cuspidal edge and $f^*$ the dual surface of $f$. 
If the function $\tilde{h}$ has a $D_4$ singularity at $p\in S(f)$, then $f^*$ has a cuspidal edge at $p$.
\end{prop}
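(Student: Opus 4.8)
The plan is to chain together the two characterizations already in hand, Lemma \ref{singheight} and Proposition \ref{singdual}, both of which are phrased in terms of the same differential-geometric invariants. Since the property that $\tilde{h}$ has a $D_4$ singularity and the property that $f^*$ has a cuspidal edge are each invariant under the coordinate changes on the source and the isometries on the target that bring $f$ to its normal form \eqref{normal}, I would first reduce to the case in which $f$ is given by \eqref{normal}; neither conclusion is affected by this reduction, and it is exactly this form under which Lemma \ref{singheight} and Proposition \ref{singdual} were established.

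With $f$ in normal form, I would apply Lemma \ref{singheight}: the hypothesis that $\tilde{h}$ has a $D_4$ singularity at $p$ is equivalent to the conjunction of three conditions at $p$, namely $\hat{\kappa}_2(p)=0$, the non-vanishing of the edge inflectional curvature $b_{30}$, and that $p$ is not a ridge point. In particular, the $D_4$ hypothesis delivers both $\hat{\kappa}_2(p)=0$ and the fact that $p$ is not a ridge point. Feeding exactly these two conditions into Proposition \ref{singdual}---whose criterion for $f^*$ to have a cuspidal edge at $p$ is precisely ``$\hat{\kappa}_2(p)=0$ and $p$ not a ridge point''---yields immediately that $f^*$ has a cuspidal edge at $p$, completing the argument.

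There is essentially no obstacle here: the proposition is a one-directional consequence obtained by forgetting the redundant third $D_4$-condition. The non-vanishing of $b_{30}$ is what separates a genuine $D_4$ from a more degenerate singularity of $\tilde{h}$, but it plays no role in determining the singularity type of the dual surface, so it is simply discarded. Correspondingly, the converse should not be expected to hold---a cuspidal edge on $f^*$ requires only two of the three conditions---which is why the statement is phrased as an implication rather than an equivalence.
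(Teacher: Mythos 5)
Your proposal is correct and matches the paper's own argument, which likewise obtains the proposition by combining Lemma \ref{singheight} with Proposition \ref{singdual}: the $D_4$ hypothesis yields $\hat{\kappa}_2(p)=0$ and that $p$ is not a ridge point, which are exactly the conditions in Proposition \ref{singdual}, with the non-vanishing of $b_{30}$ discarded as redundant. Your additional remarks on normal-form invariance and on why the converse fails are consistent with the paper's setup and do not change the argument.
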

\subsection*{Acknowledgements}
The author would like to express his gratitude to Professors 
Kentaro Saji  and Wayne Rossman for fruitful discussions 
and helpful comments. 
%He also would like to thank the refree for carefully reading and valuable comments.

%\section*{References}

%\section*{References}
%\bibliography{parallel_ce_ref.bib}

\end{document}